\documentclass[10pt]{amsart}

\usepackage{amsmath,amssymb,amsfonts,amsthm,mathtools}
\usepackage{enumitem}
\usepackage{hyperref}

\usepackage{graphicx}
\usepackage{tikz}
\usepackage{pgfplots}
\pgfplotsset{compat=1.17}

\usepackage[colorinlistoftodos]{todonotes}

\usepackage{kotex}

\numberwithin{equation}{section}

\hypersetup{
	colorlinks=true,
	linkcolor=blue,
	citecolor=red,
	urlcolor=cyan
}

\newtheorem{theorem}{Theorem}[section]
\newtheorem{proposition}[theorem]{Proposition}

\theoremstyle{definition}
\newtheorem{definition}[theorem]{Definition}

\theoremstyle{remark}
\newtheorem{remark}[theorem]{Remark}

\theoremstyle{remark}
\newtheorem{lemma}{Lemma}

\newcommand{\R}{\mathbb{R}}

\newcommand{\Z}{\mathbb{Z}}

\newcommand{\e}{\mathrm{e}}
\newcommand{\supp}{\operatorname{supp}}

\title[Sharp $L^2$ Estimates for binomial phases]
{Sharp $L^2$ Estimates for 
$(2+1)$-dimensional 
oscillatory integral operators with homogeneous
binomial phases}

\author{Chu-hee Cho, Jin Bong Lee, and Chan Woo Yang}
\address{Department of Mathematics \\
	Korea University \\
	Seoul 02841, Korea}
\email{akilus@korea.ac.kr}
\address{Research Institute of Mathematics\\
	Seoul National University \\
	Seoul 08826, Korea}
\email{jinblee@snu.ac.kr}
\address{Department of Mathematics \\
	Korea University \\
	Seoul 02841, Korea}
\email{cw$\_$yang@korea.ac.kr}
\date{}
\thanks{}

\subjclass[2020]{Primary 42B20; Secondary 35S30, 42B15}
\keywords{Oscillatory integral operators, degenerate phases, $L^2$ decay, sharp estimates}

\begin{document}
	
	\begin{abstract}
		
			We study oscillatory integral operators in $(2+1)$-dimensions with a homogeneous binomial phase
			\[
			\Phi(x,y,t)=x^{k-k_P}t^{k_P}+y^{k-k_Q}t^{k_Q},
			\qquad 1\le k_P<k_Q<k.
			\]
			For compactly supported smooth amplitudes, we establish sharp \(L^2(\R)\to L^2(\R^2)\) estimates with logarithmic losses occurring only in certain critical cases. The proof is based on scale-dependent Phong--Stein estimates.

	\end{abstract}
	
	\maketitle
	
	\section{Introduction}\label{sec:introduction}
    Consider oscillatory integral operators in $(d_u+d_v)$ dimensions
    \[
        T_\lambda f(u) = \int_{\mathbb R^{d_v}} \mathrm e^{i \lambda\Phi(u,v)} a(u,v) f(v)\,\mathrm dv, \quad u \in \mathbb R^{d_u}
    \]
    with $a\in C_c^\infty(\mathbb R^{d_u+d_v})$ and $\Phi$ is a smooth phase function.
    We are interested in the case of $d_u=2$ and $d_v=1$ with a specific phase function $\Phi$, i.e.
    \[
    T_\lambda f(x,y)=\int_{\mathbb R} e^{i\lambda \Phi(x,y,t)} a(x,y,t)f(t)\,\mathrm dt,
    \]
    where $a\in C_c^\infty(\mathbb R^3)$ and
    \[
    \Phi(x,y,t) = x^{k-k_P}t^{k_P} + y^{k-k_Q}t^{k_Q}, \qquad 1\le k_P<k_Q<k.
    \]
    One of fundamental problems in studying such operators is to determine the optimal rate of decay of $L^2(\mathbb R)\to L^2(\mathbb R^2)$ operator norm $\|T_\lambda\|$ as $\lambda\to\infty$.

    For nondegenerate phases in $(d_u+d_v)$ dimensions, the theory is well understood. Hörmander
    \cite{Ho73} proved that if the mixed Hessian of the phase is
    nondegenerate on the support of the amplitude, then the associated
    oscillatory integral operator satisfies the sharp
    $L^2$ decay estimate. This result was later extended to the setting of Fourier integral operators by Duistermaat and Hörmander
    \cite{Ho71,DH72}, and has become a cornerstone of the modern theory of oscillatory integral operators.

    The degenerate case is considerably more subtle. When the mixed Hessian becomes degenerate, the decay is no longer universal and depends delicately on the geometry of the phase. Over the past several decades, sharp estimates have been obtained for
    various classes of degenerate oscillatory integral operators,
    including operators associated with fold and cusp singularities in $(d_u+d_v)$ dimensions \cite{GS94,GS98,PS90}, Newton polyhedra in $(1+1)$ dimensions
    \cite{PS97,Va76},
    and polynomial phases in $(1+1)$ dimensions
    \cite{RS87,CCW99}.
    Despite these developments, obtaining sharp decay estimates for
    higher-dimensional degenerate models remains a challenging problem.

    For phases depending on more than two variables, obtaining sharp decay estimates
    is considerably more difficult. Greenleaf--Pramanik--Tang \cite{GPT07} studied
    oscillatory integral operators with homogeneous polynomial phases in several
    variables and established sharp estimates under suitable structural hypotheses. 
    The homogeneous binomial family considered in this paper lies outside of the scope of the structural hypothesis of \cite{GPT07}. One of goals of this paper is to provide a
    natural testing ground in which the interaction between the two monomial
    components can be analyzed explicitly. Its explicit structure allows one to identify the dominant scales and establish optimal $L^2$ bounds.

    In the special case of $(2+1)$ dimensions, several sharp results are known for homogeneous cubic phases. A classical family is
    \[
    (u,v,t)\longmapsto P_1(u,v)t^2+P_2(u,v)t,
    \]
    where $P_1$ and $P_2$ are homogeneous polynomials of degrees one and two,
    respectively. Tang \cite{Ta06} proved that if $P_2$ has no repeated linear
    factors over $\mathbb C$ and $P_1\not\equiv0$, then
    \[
    \|T_\lambda\|_{L^2(\mathbb R)\to L^2(\mathbb R^2)}
    \lesssim
    \lambda^{-1/2}
    (\log\lambda)^\gamma,
    \]
    for a suitable logarithmic exponent $\gamma$.
    In contrast, when repeated factors occur,
    the mixed Hessian degenerates more severely,
    leading to slower decay.

    For the following degenerate phase function
    \[
    S(u,v,t)=ut^2+v^2t,
    \]
    Tan--Xu \cite{TX24} established the slower decay estimate
    \[
    \|T_\lambda\|_{L^2\to L^2}
    \lesssim
    \lambda^{-3/8},
    \]
    and Lang--Tang \cite{LT25} subsequently proved that the exponent $3/8$ is sharp
    whenever the amplitude does not vanish at the origin. Xu \cite{Xu25}
    also obtained sharp $L^2\to L^p$ decay estimates for degenerate
    $(2+1)$-dimensional oscillatory integral operators, while the recent
    broad--narrow approach of \cite{Xu23} yields another proof of a related
    sharp decay estimate. These results demonstrate that even among homogeneous
    polynomial phases of fixed degree, the multiplicity of polynomial factors can
    change the optimal decay exponent.

    This motivates the study of the following homogeneous binomial family
    \begin{equation}\label{eq:intro_phase}
    \Phi(x,y,t)=x^{k-k_P}t^{k_P}+y^{k-k_Q}t^{k_Q},
    \qquad
    1\le k_P<k_Q<k.
    \end{equation}
    This family is homogeneous of degree $k$ with respect to the standard dilation
    \[
    \Phi(rx,ry,rt)=r^k \Phi(x,y,t),
    \qquad r>0,
    \]
    and exhibits a broad range of degenerate behaviors depending on the parameters
    $k_P$ and $k_Q$. Although the phase consists of only two monomials, the
    competition between the two terms produces several distinct scaling regimes,
    each leading to a different decay exponent. The principal difficulty is to determine which scaling regime governs the $L^2$ decay and to identify precisely the exceptional cases where logarithmic
    losses occur.

    Let
    \begin{equation}\label{eq:intro_operator}
    T_\lambda f(x,y)=
    \int_{\mathbb R}
    e^{i\lambda \Phi(x,y,t)}
    a(x,y,t)f(t)\,\mathrm dt,
    \end{equation}
    where $a\in C_c^\infty(\mathbb R^3)$ is supported in $[-1,1]^3$.
    Our main result determines the sharp
    $L^2(\mathbb R)\rightarrow L^2(\mathbb R^2)$
    decay of $T_\lambda$ for every choice of
    $(k,k_P,k_Q)$.
    The optimal decay exponent is expressed explicitly in terms of the parameters,
    and logarithmic losses occur only in two exceptional configurations.
    Moreover, we prove that these exponents are optimal by constructing Knapp-type lower bounds.

	To state our main theorem, we first introduce three candidate decay exponents.
    These correspond to the three possible scaling regimes arising in a dyadic
    decomposition of the operator.
	
	\begin{definition}[Decay exponents]\label{def:rho}
		Let \(k,k_P,k_Q\) satisfy \begin{equation}\label{eq:intro_parameters}
		k\ge 3,\qquad 1\le k_P<k_Q<k.	
	\end{equation} Define
		\begin{align*}
			\rho_1
			&=
			\frac1{2(k-k_P)}+\frac1{2(k-k_Q)},\\
			\rho_2
			&=
			\frac1{2(k-k_P)}
			+
			\frac1{2k_Q}\Bigl(1-\frac{k_P}{k-k_P}\Bigr)
			=
			\frac{k_Q+k-2k_P}{2k_Q(k-k_P)},\\
			\rho_3
			&=
			\frac1{2k_P},
		\end{align*}
		and set
		\begin{equation}\label{eq:rho_star}
			\rho_*=\min\{\rho_1,\rho_2,\rho_3\}.
		\end{equation}
		We also distinguish the two critical parameter configurations
		\begin{equation}\label{eq:borderlineA}
			1=\frac{k_P}{k-k_P}+\frac{k_Q}{k-k_Q},
		\end{equation}
		and
		\begin{equation}\label{eq:borderlineB}
			k=2k_P.
		\end{equation}
	\end{definition}

	The following theorem completely determines the sharp
    $L^2(\mathbb R)\rightarrow L^2(\mathbb R^2)$
    decay of the operator $T_\lambda$ associated with the phase
    \eqref{eq:intro_phase}. The power of $\lambda$ is always optimal, while logarithmic losses occur
    only in the two critical situations described above.
	\begin{theorem}\label{thm:main}
		Let \(k,k_P,k_Q\) satisfy \eqref{eq:intro_parameters}, and let \(T_\lambda\) be defined by
		\eqref{eq:intro_operator} with phase \eqref{eq:intro_phase}.  Then there exists \(C>0\) such that, for all
		\(\lambda\gg1\),
		\begin{equation}\label{eq:upperbound_main}
			\|T_\lambda\|_{2\to2}
			\le C\,\lambda^{-\rho_*}\,(\log\lambda)^{\gamma},
		\end{equation}
		where \(\rho_*\) is given by \eqref{eq:rho_star}, and the logarithmic exponent \(\gamma\in\{0,1\}\) is
		\begin{equation}\label{eq:gamma_def}
			\gamma=
			\begin{cases}
				1,& \text{if } k=2k_P,\\
				1,& \text{if } k\neq 2k_P,\ \rho_*=\rho_1,\ \text{and }\eqref{eq:borderlineA}\text{ holds},\\
				0,& \text{otherwise}.
			\end{cases}
		\end{equation}
	\end{theorem}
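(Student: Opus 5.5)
We will prove the upper bound by a dyadic decomposition in the three variables and then sum the pieces. The main tools are the $TT^*$ method, Phong--Stein type estimates on each piece, and almost-orthogonality (Cotlar--Stein).

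\textbf{Step 1: reduction to dyadic pieces.} Since $\Phi$ is homogeneous and invariant under sign changes up to harmless modifications, we may restrict to $x,y,t>0$. We insert smooth cutoffs $x\sim 2^{-j}$, $y\sim 2^{-l}$, $t\sim 2^{-m}$ and write $T_\lambda=\sum_{j,l,m}T_{j,l,m}$.

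\textbf{Step 2: the size estimate.} Schur's test gives
\[
\|T_{j,l,m}\|\lesssim 2^{-(j+l)/2}2^{-m/2}.
\]

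\textbf{Step 3: the oscillation estimate.} We compute the kernel of $T^*T$, or more conveniently freeze one outer variable and apply the one-dimensional operator van der Corput / Phong--Stein bound in $(x,t)$ or $(y,t)$. The mixed derivatives are
\[
\partial_x\partial_t\Phi\sim x^{k-k_P-1}t^{k_P-1},\qquad \partial_y\partial_t\Phi\sim y^{k-k_Q-1}t^{k_Q-1}.
\]
On a box where one of the two terms dominates, the H\"ormander estimate for the rescaled $(1+1)$ operator gives decay $(\lambda\cdot\text{mixed Hessian}\cdot\text{box size})^{-1/2}$ in that variable. In the other variable we integrate trivially, giving a factor equal to the square root of its length.

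When both terms are comparable, $2^{-j(k-k_P)}2^{-mk_P}\sim 2^{-l(k-k_Q)}2^{-mk_Q}$. Here we use the full two-variable $TT^*$ kernel instead: $\int e^{i\lambda[\Phi(x,y,t)-\Phi(x,y,s)]}\,dx\,dy$. Integrating by parts in both $x$ and $y$ gives decay in $|t-s|$ from each term, and this produces the $\rho_1$ product structure.

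Optimising the minimum of the size and oscillation bounds over $(j,l,m)$ yields three extremal regimes, which we identify as follows:
\begin{itemize}
\item[(i)] the $x$- and $y$-scales are both set by $\lambda$ at $t\sim1$, giving $\rho_1=\frac1{2(k-k_P)}+\frac1{2(k-k_Q)}$;
\item[(ii)] the $x$-scale is set at $t\sim1$ while $t$ is small and controlled by the $y$-term, giving $\rho_2$;
\item[(iii)] $x\sim y\sim1$ and $t\sim\lambda^{-1/k_P}$, giving $\rho_3=\frac1{2k_P}$.
\end{itemize}

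\textbf{Step 4: summation and almost-orthogonality.} Off the extremal regimes the dyadic bounds decay geometrically, and summing gives $\lambda^{-\rho_*}$. The log appears exactly when the summation exponent along some dyadic ray vanishes:
\begin{itemize}
\item when $k=2k_P$, the pieces with $x$ and $t$ linked by $x^{k-k_P}t^{k_P}\sim\lambda^{-1}$ all have equal norm;
\item when \eqref{eq:borderlineA} holds in regime $\rho_1$, there is a one-parameter family of equal contributions in $t$.
\end{itemize}
To avoid losing further logs elsewhere, the pieces $T_{j,l,m}$ with distinct $m$ must be combined via Cotlar--Stein. For $|m-m'|$ large, $T_{j,l,m}^*T_{j,l,m'}=0$ because the supports in $t$ are disjoint. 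The other product $T_{j,l,m}T_{j',l',m'}^*$ requires integration by parts in $t$, using that $\partial_t\Phi$ has different sizes on the two pieces.

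\textbf{Main obstacle.} We expect the hardest step to be the transition region of Step 3, where the two monomials are comparable and $\partial_t\Phi$ may vanish on a curve $t\sim$ a power of $x^{a}/y^{b}$. There we would first try a further decomposition in the distance to this curve, applying the Phong--Stein operator van der Corput lemma with the second-order mixed derivative. The goal is to show that these pieces lose nothing beyond $\lambda^{-\rho_2}$.

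We are unsure that the log losses come out exactly as in \eqref{eq:gamma_def} without extra care in Cotlar--Stein. Checking that $\gamma=0$ in the non-critical equalities among the $\rho_i$ is a delicate bookkeeping point.
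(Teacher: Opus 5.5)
\paragraph{Verdict.} There is a genuine gap, and it sits in Step~4, which is where the actual proof lives.

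\paragraph{The missing summation.} Your per-box estimates are the same as the paper's. In the paper's indexing $|x|\sim2^\ell$, $|y|\sim2^m$, $|t|\sim2^n$, the size bound (Schur) and the $(1+1)$ Phong--Stein bound with $y$ or $x$ frozen give
\[
\|T^{\ell,m,n}_\lambda\|\lesssim\min\{A_{\ell,m,n},B_{\ell,m,n},C_{\ell,m,n}\}.
\]
Here each oscillatory bound is $(\lambda\mu)^{-1/2}$ times the square root of the length of the frozen variable. You then assert that away from the extremal regimes these bounds decay geometrically and sum to $\lambda^{-\rho_*}(\log\lambda)^\gamma$. That is exactly what has to be proved, and you say yourself you are unsure about the logarithms.

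The paper does it with the plain triangle inequality. It splits $\mathbb Z_{\le0}^3$ into the regions where $A$, $B$ or $C$ is minimal. With $L=\log_2\lambda$, these become linear constraints such as $(k-k_P)\ell+k_Pn\le-L$ and $(k-k_Q)m+(k_Q-k_P)n\le(k-k_P)\ell$, and one sums iterated geometric series. In Regions II and III two devices are needed:
\begin{itemize}
\item the transition point $n_0(m)=-(L+(k-k_Q)m)/k_Q$, where the active lower bound for $\ell$ switches;
\item the exponent $\theta=\frac12-\frac{(k-2k_P)(k-k_Q)}{2k_Q(k-k_P)}$, whose sign decides whether $\lambda^{-\rho_2}$ or $\lambda^{-\rho_1}$ dominates.
\end{itemize}
The only exponents that can vanish are $\theta$ (zero iff \eqref{eq:borderlineA}, iff $\rho_1=\rho_2$) and $k-2k_P$ (zero iff $\rho_2=\rho_3$). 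A tie $\rho_1=\rho_3=\rho_*$ cannot occur: $k\le2k_P$ forces $\rho_3\le\rho_2<\rho_1$, while $k>2k_P$ forces $\rho_2<\rho_3$. This is why the logarithmic losses are exactly those recorded in $\gamma$, with no almost-orthogonality needed.

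\paragraph{The extra machinery is unnecessary and partly mistaken.}
\begin{itemize}
\item \emph{The ``main obstacle'' does not exist.} $\partial_{xt}^2\Phi=(k-k_P)k_Px^{k-k_P-1}t^{k_P-1}$ involves only the first monomial. For fixed $y$, the factor $e^{i\lambda y^{k-k_Q}t^{k_Q}}$ is a unimodular function of $t$ alone and can be absorbed into $f$. So the frozen-variable estimate holds on every dyadic box, whether or not the two monomials are comparable and wherever $\partial_t\Phi$ vanishes. No decomposition near that curve is needed.
\item \emph{The two-variable $T^*T$ kernel gives a minimum, not a product.} Integrating $\int e^{i\lambda[\Phi(x,y,t)-\Phi(x,y,s)]}\cdots\,dx\,dy$ by parts in $x$ and $y$ yields only
\[
\|T^{\ell,m,n}_\lambda\|^2\lesssim2^{\ell+m}\min\{2^n,(\lambda\mu_x2^\ell)^{-1},(\lambda\mu_y2^m)^{-1}\}=\min\{A,B,C\}^2,
\]
with $\mu_x,\mu_y$ as in \eqref{eq:mu}.
\item \emph{Where $\rho_1$ and $\rho_2$ actually come from.} $\rho_1$ arises simply from the box $x\sim\lambda^{-1/(k-k_P)}$, $y\sim\lambda^{-1/(k-k_Q)}$, $t\sim1$, where $A=B=C=\lambda^{-\rho_1}$. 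In the $\rho_2$ regime the $x$-scale is fixed by $\lambda x^{k-k_P}t^{k_P}\sim1$ at $t\sim\lambda^{-1/k_Q}$, $y\sim1$, not at $t\sim1$.
\item \emph{Your orthogonality relations are reversed.} $T_{j,l,m}T^*_{j',l',m'}=0$ when the $t$-supports are disjoint, whereas $T^*_{j,l,m}T_{j',l',m'}$ vanishes when the $(x,y)$-supports are disjoint. Cotlar--Stein could only improve matters, but it is not needed for \eqref{eq:upperbound_main}.
\end{itemize}
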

    For $k<2k_P$, the index $\rho_*$ is simplified to $\rho_3 = 1/2k_P$, hence the $L^2$ decay depends only on $k_P$.
	The proof of Theorem~\ref{thm:main} is based on dyadic localization with localized Phong--Stein estimates. Since the mixed Hessian varies according to the dyadic scales of $x$, $y$, and
    $t$, each localized piece satisfies a different oscillatory estimate.
    Optimizing these dyadic bounds yields the sharp decay estimate.

    \begin{remark}
    The sharp decay exponent $\rho_*$ admits a simple geometric interpretation.
    Let $d_{\mathcal N}$ denote the Newton distance of the phase $\Phi$.
    Then, we have the identity
    \[
    \rho_*=\frac1{2d_{\mathcal N}},
    \]
    which shows that the sharp decay exponent is determined by the Newton distance of the
    phase. This is consistent with the role of the Newton distance in the asymptotic analysis of scalar oscillatory integrals associated with Newton polyhedra (see \cite{PS97}).
    \end{remark}

    \begin{remark}

    The three candidate exponents correspond to three distinct scaling regimes in
    the dyadic decomposition.
    \begin{itemize}
    \item
    $\rho_1$ corresponds to the balanced regime, where both monomials contribute
    equally to the oscillation.
    
    \item
    $\rho_2$ corresponds to an intermediate regime in which oscillation is exploited
    essentially in only one output direction.
    
    \item
    $\rho_3$ corresponds to the size-dominated regime, where the support in the
    integration variable is too short for oscillation to produce additional decay.
    
    \end{itemize}
    The critical relations \eqref{eq:borderlineA} and \eqref{eq:borderlineB} correspond to borderline geometric
    series in the dyadic summation and therefore produce logarithmic losses.
    \end{remark}

    The upper bound in Theorem~\ref{thm:main} is optimal up to the logarithmic growth.
    The following theorem provides the corresponding lower bound.
	
	\begin{theorem}\label{thm:lbound}
		Let \(k,k_P,k_Q\) satisfy \eqref{eq:intro_parameters}, and let \(T_\lambda\) be defined by
		\eqref{eq:intro_operator} with phase \eqref{eq:intro_phase}.  Assume in addition that
		\begin{equation*}
			a(0,0,0)\neq 0.
		\end{equation*}
		Then there exist constants \(c>0\) and \(\lambda_1\ge2\) such that, for all \(\lambda\ge\lambda_1\),
		\begin{equation}\label{eq:lowerbound_main}
			\|T_\lambda\|_{2\to2}
			\ge c\,\lambda^{-\rho_*}.
		\end{equation}
	\end{theorem}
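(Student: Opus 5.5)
We prove the lower bound with Knapp-type examples, one for each of the three regimes in the definition of $\rho_*$. Since $\rho_*=\min\{\rho_1,\rho_2,\rho_3\}$, it is enough to show $\|T_\lambda\|_{2\to2}\ge c\,\lambda^{-\rho_j}$ for each $j=1,2,3$ separately.

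\emph{Reduction.} Multiplying $a$ by a unimodular constant, we may assume $a(0,0,0)>0$. By continuity there is a small $c_0\in(0,1)$ with $\operatorname{Re} a\ge a(0,0,0)/2$ on $[-c_0,c_0]^3$.

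Fix a box $X\times Y\times I\subset[0,c_0]^3$ on which
\[
\lambda|\Phi(x,y,t)|\le \tfrac{1}{10},
\]
and set $f=\chi_I$. For $(x,y)\in X\times Y$ we then have
\[
\operatorname{Re}\bigl(e^{i\lambda\Phi}a\bigr)\ge \tfrac{a(0,0,0)}{2}\cos\tfrac1{10}\gtrsim 1
\quad\text{on } I .
\]
Hence $|T_\lambda f(x,y)|\gtrsim |I|$ on $X\times Y$, and therefore
\[
\|T_\lambda f\|_{L^2(\R^2)}\gtrsim |I|\,(|X||Y|)^{1/2}.
\]
Since $\|f\|_2=|I|^{1/2}$, this gives
\[
\|T_\lambda\|_{2\to2}\gtrsim \bigl(|I|\,|X|\,|Y|\bigr)^{1/2}.
\]
All boxes below are sub-boxes of $[0,c_0]^3$, and all constants depend only on $k,k_P,k_Q$ and $a$.

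\emph{Choice of boxes.} We use a one-parameter family of boxes. Take $I=[0,\delta]$, $X=[0,\epsilon\,(\lambda\delta^{k_P})^{-1/(k-k_P)}]$ and $Y=[0,\epsilon\,(\lambda\delta^{k_Q})^{-1/(k-k_Q)}]$, with $\epsilon$ small. Then each monomial satisfies $\lambda x^{k-k_P}t^{k_P}\le\epsilon^{k-k_P}$ and $\lambda y^{k-k_Q}t^{k_Q}\le\epsilon^{k-k_Q}$, so the phase condition holds. The only constraint is that the lengths of $X$, $Y$, $I$ are at most $c_0$.
\begin{enumerate}
\item[$(\rho_1)$] Take $\delta=c_0$. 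Then $|X|\sim\lambda^{-1/(k-k_P)}$, $|Y|\sim\lambda^{-1/(k-k_Q)}$ and $|I|\sim1$, which gives the bound $\lambda^{-\rho_1}$.
\item[$(\rho_2)$] Take $\delta=\lambda^{-1/k_Q}$. Then $|Y|\sim1$ (shrunk to fit in $[0,c_0]$), and
\[
|X|\sim\lambda^{-1/(k-k_P)}\delta^{-k_P/(k-k_P)}.
\]
Here $\delta<1$, and $|X|$ is small because $k_P<k_Q$ makes the exponent of $\lambda$ in $|X|$ negative. Thus
\[
|I||X||Y|\sim \lambda^{-1/(k-k_P)}\,\delta^{\,1-k_P/(k-k_P)}=\lambda^{-2\rho_2},
\]
using exactly the first expression for $\rho_2$ in Definition~\ref{def:rho}.
\item[$(\rho_3)$] Take $\delta=\epsilon\lambda^{-1/k_P}$ with $X=Y=[0,c_0]$. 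Then $\lambda|\Phi|\lesssim\epsilon^{k_P}$, and the bound is $(|I|)^{1/2}\sim\lambda^{-1/(2k_P)}=\lambda^{-\rho_3}$.
\end{enumerate}

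\emph{Main point to check.} The only real content is verifying that each chosen box fits inside $[0,c_0]^3$ for $\lambda\ge\lambda_1$, that is, that every exponent of $\lambda$ in $|X|$, $|Y|$, $|I|$ is nonpositive. This is straightforward from $1\le k_P<k_Q<k$.

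It also explains why no further examples are needed. As $\delta$ ranges over $[\lambda^{-1/k_P},1]$, the quantity $\log(|I||X||Y|)$ is piecewise linear in $\log\delta$. Its breakpoints are where $|X|$ or $|Y|$ saturates at size one, namely $\delta=\lambda^{-1/k_P}$, $\lambda^{-1/k_Q}$ and $1$. These three endpoints are exactly the three choices above, so the best Knapp example in this family is attained at one of them. Taking the maximum of the three lower bounds gives $\|T_\lambda\|_{2\to2}\ge c\,\lambda^{-\rho_*}$.
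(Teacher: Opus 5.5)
Your proposal is correct and is essentially the paper's argument: the paper also tests on the indicator of a $t$-interval of length $\sim\lambda^{-\delta}$, takes an $(x,y)$-box of size $\min\{\lambda^{(k_P\delta-1)/(k-k_P)},1\}\times\min\{\lambda^{(k_Q\delta-1)/(k-k_Q)},1\}$ on which $|\lambda\Phi|$ is small, deduces $\|T_\lambda\|\gtrsim(|I||X||Y|)^{1/2}$, and evaluates at $\delta=0,1/k_Q,1/k_P$, which are exactly your three choices $|I|\sim 1,\lambda^{-1/k_Q},\lambda^{-1/k_P}$. One small fix: since you allow complex $a$ (rotating by a unimodular constant), you should choose $c_0$ so that also $|a-a(0,0,0)|\le a(0,0,0)/4$ on $[-c_0,c_0]^3$, because $\operatorname{Re}a\ge a(0,0,0)/2$ alone does not control the term $-\sin(\lambda\Phi)\operatorname{Im}a$ in $\operatorname{Re}(e^{i\lambda\Phi}a)$.
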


	\begin{remark}
    Theorem~\ref{thm:main} recovers several previously known sharp decay estimates.
    When
    \[
    (k,k_P,k_Q)=(3,1,2),
    \]
    the phase becomes
    \[
    S(x,y,t)=x^2t+yt^2.
    \]
    In this case
    \[
    \rho_*=\frac38,
    \]
    which agrees with the sharp decay established by
    Tan--Xu \cite{TX24} and Lang--Tang \cite{LT25}.
    Our theorem therefore extends the known cubic result to the entire homogeneous
    binomial family.
	\end{remark}

	\subsection*{Organization}
	The paper is organized as follows.
    Section~\ref{sec:pre} introduces the dyadic decomposition and recalls the localized Phong--Stein estimate used throughout the paper. Section~\ref{sec:thm_main} reduces Theorem~\ref{thm:main} to a discrete optimization problem involving a triple dyadic sum.
    Section~\ref{sec:lower} establishes the lower bound in Theorem~\ref{thm:lbound} by a scale-adapted Knapp-type construction.
    Finally, Section~\ref{sec:triplesum} computes the triple sum and completes the proof of the main
    theorem.

	\subsection*{Notation}\label{subsec:conv}
	
	We write \(A\lesssim B\) to mean \(A\le C\,B\) with a constant \(C>0\) independent of \(\lambda\)
	and of the dyadic indices, where the implicit constant may depend on \(k,k_P,k_Q\) and finitely many seminorms of \(a\).
	We write \(A\approx B\) if both \(A\lesssim B\) and \(B\lesssim A\) hold.
	All logarithms are natural; replacing \(\log\) by \(\log_2\) only changes constants.

	\section{Preliminaries}\label{sec:pre}
	
	The proof of Theorem~\ref{thm:main} relies on two ingredients.
    The first is a localized $(1+1)$-dimensional oscillatory integral estimate of Phong--Stein type, while the second is a dyadic decomposition adapted to the small-support setting.
    This section collects these preliminary tools.
	
	\subsection{A $(1+1)$-dimensional $L^2$ lemma of Phong--Stein}\label{subsec:PS}
	
	We repeatedly use the following localized version of the
    Phong--Stein estimate (cf. \cite[Lemma~ 1.1]{PS94}).
    Compared with the original statement,
    we explicitly formulate the assumptions needed in our dyadic setting,
    namely the support condition in the integration variable and uniform
    $t$-derivative bounds.
	
	\begin{lemma}\label{lem:PS}
		Let a $(2+1)$-dimensional oscillatory integral operator be given by
		\begin{equation}\label{eq:PS_op}
			\mathcal{T}_\lambda g(x)
			=
			\int_{\R}\e^{\,i\lambda \mathcal S(x,t)}\,\psi(x,t)\,g(t)\,\mathrm dt,
			\qquad \lambda\ge 2,
		\end{equation}
		where $\psi\in C_c^\infty(\R^2)$ and $\mathcal S:\R^2\to\R$.
		Assume that
		\begin{enumerate}[leftmargin=2.2em]
			 \item (Support condition) 
			$\supp ψ(x,\cdot)\subset I_x,\quad|I_x|\le\delta$.
            
			\item (Derivative bounds in $t$) There exists $C\ge 1$ such that
			\begin{equation}\label{eq:PS_amp_bounds}
				|\partial_t^N\psi(x,t)|\le C\,\delta^{-N}
				\qquad \text{for all }(x,t)\in\R^2\text{ and }N=0,1,2.
			\end{equation}
			
			\item (Polynomial phase) For every fixed $x$, the map $t\mapsto \mathcal S(x,t)$ is a real polynomial.
			
			\item (Non-degeneracy) There exist constants $\mu>0$ and $A\ge 1$ such that
			\begin{equation*}
				0<\mu \le |\partial_{xt}^2 \mathcal S(x,t)|\le A\mu
				\qquad \text{for all }(x,t)\in\supp\psi.
			\end{equation*}
		\end{enumerate}
		Then $\mathcal{T}_\lambda:L^2(\R)\to L^2(\R)$ is bounded and
		\begin{equation}\label{eq:PS_bound}
			\|\mathcal{T}_\lambda\|_{L^2(\R)\to L^2(\R)}
			\le C'(\lambda\mu)^{-1/2},
		\end{equation}
		where the constant $C'$ depends only on
        $A$, the bounds in \eqref{eq:PS_amp_bounds},
	\end{lemma}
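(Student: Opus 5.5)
Our approach is the classical $TT^*$ argument combined with the Phong--Stein operator van der Corput lemma. By scaling $t\mapsto \delta t$ one could normalize the support length to $1$. We prefer to work directly. The kernel of $\mathcal T_\lambda^*\mathcal T_\lambda$ is
\[
K(t,s)=\int_{\R}\e^{\,i\lambda(\mathcal S(x,t)-\mathcal S(x,s))}\,\overline{\psi(x,s)}\,\psi(x,t)\,\mathrm dx .
\]
Because the nondegeneracy hypothesis concerns $\partial_{xt}^2\mathcal S$ and the derivative bounds are in $t$, it is more natural to form $\mathcal T_\lambda\mathcal T_\lambda^*$. Its kernel is
\[
K(x,x')=\int_{\R}\e^{\,i\lambda(\mathcal S(x,t)-\mathcal S(x',t))}\,\psi(x,t)\overline{\psi(x',t)}\,\mathrm dt,
\]
and the $t$-integration runs over $I_x\cap I_{x'}$, which has length at most $\delta$.

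The phase derivative is $\partial_t[\mathcal S(x,t)-\mathcal S(x',t)]=\int_{x'}^{x}\partial_{xt}^2\mathcal S(u,t)\,\mathrm du$. By the nondegeneracy hypothesis and continuity, $\partial_{xt}^2\mathcal S$ has a fixed sign on each connected component of the support. On the segment joining $x'$ to $x$ one therefore gets $|\partial_t(\cdots)|\ge \mu|x-x'|$, at least after a harmless decomposition of $\supp\psi$ into boundedly many pieces where the sign is constant and the segment stays inside. In general the segment $[x',x]\times\{t\}$ may leave $\supp\psi$. This is the point where the polynomial hypothesis is used. For fixed $t$ the phase is polynomial in $t$, so the set where $\partial_t$ of the phase difference is small is a union of a bounded number of intervals, with the number controlled by the degree. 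We follow Phong--Stein: decompose the $t$-interval into $O(\deg)$ monotonicity intervals of the phase derivative, and integrate by parts on each.

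Integrating by parts twice, or once with the monotone van der Corput variant, gives
\[
|K(x,x')|\lesssim \min\Bigl\{\delta,\ \frac{\delta}{(\lambda\mu|x-x'|\delta)^{2}}+\frac{1}{\lambda\mu|x-x'|}\Bigr\}.
\]
Here each $t$-derivative falling on the amplitude costs $\delta^{-1}$ by \eqref{eq:PS_amp_bounds}, with $N\le2$ sufficing, and the boundary and monotonicity pieces give the $(\lambda\mu|x-x'|)^{-1}$ term. The second derivative of the phase is handled via $|\partial_t^2(\cdots)|\lesssim A\mu|x-x'|/\delta$? This is not given directly. To avoid it we use the first-order van der Corput bound with monotonicity from the polynomial structure. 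This yields
\[
|K(x,x')|\lesssim \min\{\delta,(\lambda\mu|x-x'|)^{-1}\}\,\bigl(1+\textstyle\int|\partial_t(\psi\bar\psi)|\bigr)\lesssim \min\{\delta,(\lambda\mu|x-x'|)^{-1}\}.
\]

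The Schur test then requires $\sup_x\int|K(x,x')|\,\mathrm dx'$. This integral produces a logarithm, since $\int\min\{\delta,(\lambda\mu r)^{-1}\}\,\mathrm dr$ diverges logarithmically. So the first-order bound is insufficient, and the main obstacle is removing this log. We would follow Phong--Stein's almost-orthogonality approach instead. Decompose $x$ into intervals $J_j$ of length $(\lambda\mu\delta)^{-1}$ and write $\mathcal T_\lambda=\sum_j\mathcal T_j$. Each piece satisfies $\|\mathcal T_j\|\lesssim (\lambda\mu)^{-1/2}$ trivially by Cauchy--Schwarz: the kernel has size $1$ on a set of size $|J_j|\times\delta$, giving norm at most $(|J_j|\delta)^{1/2}=(\lambda\mu)^{-1/2}$. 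For the interactions, $\mathcal T_j^*\mathcal T_{j'}=0$ automatically because the $x$-supports are disjoint. The terms $\mathcal T_j\mathcal T_{j'}^*$ are estimated by the kernel bound above with two integrations by parts on the monotone pieces, giving decay $|j-j'|^{-2}$. The Cotlar--Stein lemma then yields \eqref{eq:PS_bound}. Controlling $\partial_t$ of $1/\partial_t(\text{phase})$ in that second integration by parts is where the polynomial hypothesis, which bounds the number of sign changes, and the factor $A$ enter. We expect this to be the delicate step.
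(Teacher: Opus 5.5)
The paper does not prove this lemma; it only cites Phong--Stein. Measured against what a proof needs, your proposal has a genuine gap, and it sits exactly at the step you call delicate.

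Write $\Delta(t)=\mathcal S(x,t)-\mathcal S(x',t)$ and $b=\psi(x,\cdot)\overline{\psi(x',\cdot)}$. After one integration by parts in $K(x,x')$ you are left with a term of the form $\lambda^{-1}\int \e^{i\lambda\Delta}\,b\,\Delta''(\Delta')^{-2}\,\mathrm dt$, which is $\lambda^{-1}\int \e^{i\lambda\Delta}\,b\,\mathrm d(1/\Delta')$ up to a constant.

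Knowing that $\Delta'$ has boundedly many monotone pieces only bounds the total variation of $1/\Delta'$ by $O((\mu|x-x'|)^{-1})$. So this term is again $O((\lambda\mu|x-x'|)^{-1})$, which is precisely the log-divergent bound you rejected. Cutting the $t$-integral at monotonicity points also creates boundary terms of that same first-order size. No count of sign changes can do better: a sharp jump of $\Delta'$ inside a window of length $\ll(\lambda\mu|x-x'|)^{-1}$ sees no cancellation.

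The missing idea is a \emph{pointwise} bound at scale $\delta$. This is where polynomiality (of bounded degree $d$, on which the constant must then depend) and $A$ really enter. $\Delta'=\int_{x'}^{x}\partial^2_{xt}\mathcal S(u,\cdot)\,\mathrm du$ is a polynomial of degree $\le d-1$ whose values lie in $[\mu|x-x'|,A\mu|x-x'|]$ (up to sign) on an interval of length $\approx\delta$. Markov's inequality therefore gives $|\Delta''|\lesssim_{d,A}\mu|x-x'|\delta^{-1}$ and $|\Delta'''|\lesssim_{d,A}\mu|x-x'|\delta^{-2}$.

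Two integrations by parts then give $|K(x,x')|\lesssim\delta\,(1+\lambda\mu\delta|x-x'|)^{-2}$. There are no boundary terms, since $b$ is compactly supported, and only $N\le2$ derivatives of $\psi$ are used. Schur's test then gives $\|\mathcal T_\lambda\mathcal T_\lambda^*\|\lesssim(\lambda\mu)^{-1}$ directly.

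Your Cotlar--Stein detour would not close even with $|j-j'|^{-2}$ decay. It yields $\|\mathcal T_j\mathcal T_{j'}^*\|\lesssim(\lambda\mu)^{-1}|j-j'|^{-2}$, whose square roots $(\lambda\mu)^{-1/2}|j-j'|^{-1}$ are not summable. It is Schur's test on the full kernel that avoids the logarithm.

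The other gap is geometric. Both $|\Delta'|\ge\mu|x-x'|$ and the Markov step require that, for $t$ in an interval of length $\approx\delta$, the whole segment $[x',x]\times\{t\}$ lies in a set where $\mu\le|\partial^2_{xt}\mathcal S|\le A\mu$ with a fixed sign. Polynomiality in $t$ says nothing about this $x$-geometry, and the hypotheses do not bound the number of pieces in your ``harmless decomposition''.

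In fact the lemma fails as literally stated. Take $\mathcal S=\phi(x)t$ with $\phi(x)=\sin^2(\pi Nx)$. Let $\psi(x,t)=\rho(\phi(x))\eta(t)$ for $x\in[0,1]$ and $0$ otherwise, where $\rho\in C_c^\infty((0,1))$ equals $1$ on $[\frac1{10},\frac9{10}]$ and $\eta$ is a fixed bump on $[0,1]$. Then $\delta=1$, $A$ is fixed and $\mu\approx N$. Change variables $\xi=\lambda\phi(x)$ on each of the $2N$ monotone branches and test on a bump modulated to frequency $\lambda/2$. For large $\lambda$ this gives $\|\mathcal T_\lambda\|\gtrsim\lambda^{-1/2}$, which exceeds $(\lambda\mu)^{-1/2}\approx(\lambda N)^{-1/2}$ by a factor $\approx N^{1/2}$.

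So an extra hypothesis is needed, for instance $\supp\psi\subset J\times I$ with $|I|\approx\delta$ and the non-degeneracy, with constant sign, on all of $J\times I$. This holds in the paper's application once $\varphi_\ell$ and $\varphi_n$ are split into their positive and negative parts. Under it, the $TT^*$--Markov--Schur argument above is a complete proof.
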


	\subsection{Dyadic localization in $(x,y,t)$}\label{subsec:dyadic}
	
	We now localize the operator \eqref{eq:intro_operator} dyadically in each variable.
	Fix $\varphi\in C_c^\infty(\R)$ such that
	\begin{equation*}
		\supp\varphi \subset \Bigl\{s\in\R:\frac12\le |s|\le 2\Bigr\},
	\end{equation*}
	and
	\begin{equation*}
		\sum_{j\in\Z}\varphi(2^{-j}s)=1\qquad\text{for all }s\neq 0.
	\end{equation*}
	For each $j\in\Z$, set
	\[
	\varphi_j(s):=\varphi(2^{-j}s).
	\]
	Since $\supp a$ is compact,
    only finitely many positive dyadic indices occur.
    Absorbing these into the implicit constants,
    we henceforth restrict to $\ell,m,n\le0$.

	Define the localized amplitudes
	\begin{equation*}
		a_{\ell,m,n}(x,y,t)
		=
		a(x,y,t)\,\varphi_\ell(x)\,\varphi_m(y)\,\varphi_n(t),
		\qquad \ell,m,n\le0,
	\end{equation*}
	and the corresponding localized operators
	\begin{equation}\label{eq:Tlmn_def}
		T_\lambda^{\ell,m,n}f(x,y)
		=
		\int_{\R}\e^{\,i\lambda\Phi(x,y,t)}\,a_{\ell,m,n}(x,y,t)\,f(t)\,\mathrm dt.
	\end{equation}
	Since the coordinate hyperplanes have measure zero,
    the kernel of $T_\lambda$ agrees almost everywhere with the sum of the localized kernels. Hence
    \begin{equation*}
        T_\lambda=\sum_{\ell,m,n\le0}T^{\ell,m,n}_\lambda .
    \end{equation*}
	On \(\supp a_{\ell,m,n}\) one has the dyadic scale relations
	\begin{equation*}
		|x|\sim 2^\ell,\qquad |y|\sim 2^m,\qquad |t|\sim 2^n.
	\end{equation*}
	Moreover,
	\begin{equation*}
		|\supp a_{\ell,m,n}|
		\lesssim
		2^{\ell+m+n}.
	\end{equation*}
	For \(N=0,1,2\), the \(t\)-derivative bounds
	\begin{equation*}
		|\partial_t^N a_{\ell,m,n}(x,y,t)|
		\lesssim
		2^{-nN}
	\end{equation*}
	hold uniformly in \(\ell,m,n\le0\). Indeed, each \(t\)-derivative falling on \(\varphi_n(t)=\varphi(2^{-n}t)\) produces a factor \(2^{-n}\), while derivatives falling on \(a\) are uniformly bounded; since \(n\le0\), these terms are also bounded by \(C_N2^{-nN}\).

	In the next section, we obtain $L^2$ estimates for each localized operator \(T_\lambda^{\ell,m,n}\), optimize among the available dyadic bounds, and reduce the proof of Theorem~\ref{thm:main} to a discrete triple-sum estimate.

	\section{Proof of Theorem~\ref{thm:main}: reduction to a triple sum}\label{sec:thm_main}
	
	In this section we prove the upper bound in Theorem~\ref{thm:main} assuming
	Proposition~\ref{prop:triplesum}. The proof proceeds in two steps:
	\begin{enumerate}[label=\textup{(\arabic*)},leftmargin=2.2em]
		\item establish localized estimates for each dyadic piece $T_\lambda^{\ell,m,n}$;
		\item sum these bounds over $(\ell,m,n)\in\Z_{\le0}^3$ after optimizing at each scale.
	\end{enumerate}
	Throughout, $\ell,m,n\le 0$ and $T_\lambda^{\ell,m,n}$ is defined in \eqref{eq:Tlmn_def}.
	
	We first obtain a size estimate for $T_\lambda^{\ell,m,n}$.
	
	\begin{lemma}\label{lem:size}
		For each $\ell,m,n\le 0$,
		\begin{equation*}
			\|T_{\lambda}^{\ell,m,n}\|_{L^2(\R)\to L^2(\R^2)}
			\lesssim 2^{(\ell+m+n)/2}.
		\end{equation*}
		The implicit constant depends only on finitely many seminorms of $a$ and the fixed cutoff $\varphi$.
	\end{lemma}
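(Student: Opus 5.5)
The plan is to prove Lemma~\ref{lem:size} by a trivial size estimate that ignores the oscillation. The quickest route is the Schur test or, even more simply, Cauchy--Schwarz applied directly to the kernel
\[
K(x,y,t)=\e^{\,i\lambda\Phi(x,y,t)}a_{\ell,m,n}(x,y,t).
\]
It satisfies $|K|\lesssim \mathbf 1_{\{|x|\sim2^\ell,\,|y|\sim 2^m,\,|t|\sim 2^n\}}$, with constant bounded by $\|a\|_\infty\|\varphi\|_\infty^3$. So the whole argument reduces to the measure of the support, recorded in Section~\ref{subsec:dyadic} as $|\supp a_{\ell,m,n}|\lesssim 2^{\ell+m+n}$.

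\textbf{Step 1: Cauchy--Schwarz in $t$.} For fixed $(x,y)$, Cauchy--Schwarz in $t$ gives
\[
|T_\lambda^{\ell,m,n}f(x,y)|\le \Bigl(\int |a_{\ell,m,n}(x,y,t)|^2\,\mathrm dt\Bigr)^{1/2}\|f\|_{L^2}\lesssim 2^{n/2}\,\mathbf 1_{\{|x|\sim 2^\ell,\,|y|\sim 2^m\}}\|f\|_{L^2}.
\]
This uses that the $t$-support of $a_{\ell,m,n}(x,y,\cdot)$ lies in $\{2^{n-1}\le|t|\le 2^{n+1}\}$, a set of measure $\lesssim 2^n$.

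\textbf{Step 2: integrate in $(x,y)$.} Squaring and integrating over $(x,y)$ in the set $\{2^{\ell-1}\le|x|\le2^{\ell+1},\ 2^{m-1}\le|y|\le 2^{m+1}\}$, which has measure $\lesssim 2^{\ell+m}$, gives
\[
\|T_\lambda^{\ell,m,n}f\|_{L^2(\R^2)}^2\lesssim 2^{\ell+m+n}\|f\|_{L^2}^2.
\]
Taking square roots yields the claimed bound $2^{(\ell+m+n)/2}$. Equivalently, this is the Hilbert--Schmidt bound $\|K\|_{L^2(\R^3)}\lesssim |\supp a_{\ell,m,n}|^{1/2}$.

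Nothing here depends on $\lambda$ or on the phase, and only $\|a\|_\infty$ and $\|\varphi\|_\infty$ enter the constant, consistent with the statement. There is no real obstacle. The only point to check is the uniformity of the constant in $(\ell,m,n)$: the dyadic annuli have measure exactly proportional to $2^\ell$, $2^m$, $2^n$ with absolute constants. For nonpositive indices, the compact support of $a$ in $[-1,1]^3$ causes no truncation issues, since truncation can only decrease the support.
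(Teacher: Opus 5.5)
Your proof is correct and is essentially the paper's argument. The paper applies the Hilbert--Schmidt bound $\|T_\lambda^{\ell,m,n}\|_{2\to2}\le\|K_{\ell,m,n}\|_{L^2(\R^3)}$ together with $|a_{\ell,m,n}|\le C_a$ and $|\supp a_{\ell,m,n}|\lesssim 2^{\ell+m+n}$; your Cauchy--Schwarz in $t$ followed by integration over $(x,y)$ is exactly that computation written out, as you yourself note.
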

	
	\begin{proof}
		Write $T_{\lambda}^{\ell,m,n}$ as an integral operator from $L^2(\R_t)$ to $L^2(\R^2_{x,y})$ with kernel
		\[
		K_{\ell,m,n}(x,y;t)
		=
		\e^{\,i\lambda\Phi(x,y,t)}\,a_{\ell,m,n}(x,y,t).
		\]
		By the Hilbert--Schmidt inequality,
		\begin{equation}\label{eq:HS}
			\|T_{\lambda}^{\ell,m,n}\|_{2\to2}
			\le
			\|K_{\ell,m,n}\|_{L^2(\R^2_{x,y}\times\R_t)}
			=
			\Bigl(\iiint_{\R^3}|a_{\ell,m,n}(x,y,t)|^2\,\mathrm dx\,\mathrm dy\,\mathrm dt\Bigr)^{1/2}.
		\end{equation}
			Since both $a$ and $\Phi$ are uniformly bounded, we have
			\[
			|a_{\ell,m,n}(x,y,t)|\le C_a.
			\]
			Moreover,
			\[
			\supp a_{\ell,m,n}
			\subset
			\supp a\cap
			\{ |x|\sim 2^\ell,\ |y|\sim 2^m,\ |t|\sim 2^n\}.
			\]
			Hence
			\[
			|\supp a_{\ell,m,n}|
			\lesssim
			2^{\ell+m+n}.
			\]
			Substituting this support estimate into \eqref{eq:HS} gives
			\[
			\|T_{\lambda}^{\ell,m,n}\|_{2\to2}
			\lesssim
			2^{(\ell+m+n)/2}.
			\]
	\end{proof}
	
	We next exploit oscillation using the $(1+1)$-dimensional estimate Lemma~\ref{lem:PS}.
	The relevant mixed derivatives of the phase are
	\begin{equation*}
		\partial_{xt}^2\Phi(x,y,t)
		=
		(k-k_P)k_P\,x^{k-k_P-1}t^{k_P-1},
		\quad
		\partial_{yt}^2\Phi(x,y,t)
		=
		(k-k_Q)k_Q\,y^{k-k_Q-1}t^{k_Q-1}.
	\end{equation*}
	On $\supp a_{\ell,m,n}$, one has
	\begin{equation}\label{eq:mu}
		|\partial_{xt}^2\Phi|
		\approx
		2^{(k-k_P-1)\ell}\,2^{(k_P-1)n},
		\qquad
		|\partial_{yt}^2\Phi|
		\approx
		2^{(k-k_Q-1)m}\,2^{(k_Q-1)n}.
	\end{equation}
		Then by Lemma~\ref{lem:PS}, we have
	\begin{lemma}\label{lem:osc}
		For each $\ell,m,n\le 0$,
		\begin{align}\label{eq:osc}
			\begin{split}
				&\|T_{\lambda}^{\ell,m,n}\|_{L^2(\R)\to L^2(\R^2)}
				\\
				\lesssim\;
				&\min\Bigl\{
				(\lambda\,2^{(k-k_P-1)\ell}2^{(k_P-1)n})^{-1/2}\,2^{m/2},\;
				(\lambda\,2^{(k-k_Q-1)m}2^{(k_Q-1)n})^{-1/2}\,2^{\ell/2}
				\Bigr\}.
			\end{split}
		\end{align}
	\end{lemma}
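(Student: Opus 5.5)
To prove Lemma~\ref{lem:osc}, we will treat each of the two bounds in the minimum separately, in each case freezing one output variable and applying Lemma~\ref{lem:PS} in the remaining one. The key inequality is
\[
\|T\|_{L^2(\R_t)\to L^2(\R^2_{x,y})}^2 \le \int \|T_y\|_{L^2(\R_t)\to L^2(\R_x)}^2\,\mathrm dy,
\]
where $T_y g(x)=\int \e^{i\lambda\Phi(x,y,t)}a_{\ell,m,n}(x,y,t)g(t)\,\mathrm dt$. It follows from Fubini: $\|Tf\|_{L^2(\R^2)}^2=\int\|T_yf\|_{L^2(\R_x)}^2\,\mathrm dy$. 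Since $T_y$ vanishes unless $|y|\sim 2^m$, a uniform bound $\|T_y\|\le B$ gives $\|T_\lambda^{\ell,m,n}\|\lesssim B\,2^{m/2}$.

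For the first bound, we fix $y$ with $|y|\sim 2^m$ and apply Lemma~\ref{lem:PS} to $T_y$ with $\mathcal S(x,t)=\Phi(x,y,t)$, $\psi(x,t)=a_{\ell,m,n}(x,y,t)$, and $\delta=2^{n+2}$. The four hypotheses are checked as follows.
\begin{enumerate}[label=\textup{(\roman*)},leftmargin=2.2em]
\item \emph{Support.} For each fixed $x$, the $t$-support lies in $\{|t|\sim 2^n\}$, which is a union of two intervals of length $\lesssim 2^n$.
\item \emph{Derivative bounds.} These are the $t$-derivative bounds $|\partial_t^N a_{\ell,m,n}|\lesssim 2^{-nN}$ recorded in Section~\ref{subsec:dyadic}, uniformly in $y$.
\item \emph{Polynomial phase.} For fixed $x$ and $y$, the map $t\mapsto\Phi(x,y,t)$ is a polynomial in $t$.
\item \emph{Non-degeneracy.} By \eqref{eq:mu}, $|\partial_{xt}^2\Phi|\approx\mu:=2^{(k-k_P-1)\ell}2^{(k_P-1)n}$ on the support, with comparability constant $A$ depending only on $k,k_P$ and the cutoff. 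Here $\partial_{xt}^2\Phi$ does not vanish because $x,t\neq0$ on the support.
\end{enumerate}

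The only subtlety is the support condition, since $\{|t|\sim 2^n\}$ is two intervals rather than one. We handle it by splitting $\varphi_n=\varphi_n\mathbf 1_{t>0}+\varphi_n\mathbf 1_{t<0}$. These pieces are still smooth because $\varphi$ vanishes near $0$, and each has support in a single interval of length $\le 2^{n+1}$. If needed, we likewise split $\varphi_\ell$ in $x$ by sign, so that the sign of $x^{k-k_P-1}t^{k_P-1}$ is constant on each piece. This is harmless in any case, since Lemma~\ref{lem:PS} only uses $|\partial_{xt}^2\mathcal S|$.

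Lemma~\ref{lem:PS} then gives $\|T_y\|\lesssim(\lambda\mu)^{-1/2}$. The constant is uniform in $y,\ell,m,n$, because it depends only on $A$ and the derivative constants. Integrating over $|y|\sim2^m$ yields the first term of the minimum, $(\lambda 2^{(k-k_P-1)\ell}2^{(k_P-1)n})^{-1/2}2^{m/2}$.

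The second term follows by the symmetric argument. We freeze $x$ with $|x|\sim2^\ell$, apply Lemma~\ref{lem:PS} in $(y,t)$ with $\mu=2^{(k-k_Q-1)m}2^{(k_Q-1)n}$ from \eqref{eq:mu}, and integrate over $x$ to gain $2^{\ell/2}$.

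Taking the minimum of the two bounds gives \eqref{eq:osc}. The main point requiring care is uniformity: the constant in Lemma~\ref{lem:PS} must not depend on the frozen variable or on the dyadic indices. This holds because hypotheses (ii) and (iv) are verified with constants depending only on $k,k_P,k_Q$, $\varphi$, and finitely many derivatives of $a$.
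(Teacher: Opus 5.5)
Your proposal is correct and follows essentially the same route as the paper. You freeze $y$ (resp.\ $x$), apply Lemma~\ref{lem:PS} in $(x,t)$ (resp.\ $(y,t)$) with $\delta\approx 2^n$ and $\mu$ read off from \eqref{eq:mu}, then use Fubini to integrate the squared uniform bound over the frozen variable's support of measure $O(2^m)$ (resp.\ $O(2^\ell)$); your explicit splitting of $\varphi_n$ (and $\varphi_\ell$, $\varphi_m$) by sign simply spells out the ``finite sign-splitting'' that the paper's proof invokes without describing it.
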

	
	\begin{proof}
		We prove the first bound in \eqref{eq:osc} for fixed $y\in \supp(\varphi_m)$.  The second bound is obtained in the same way by fixing $x$, applying Lemma~\ref{lem:PS} in the $(y,t)$ variables, and then integrating over the $x$-support, whose measure is $O(2^\ell)$.
		
		Fix $y\in\supp\varphi_m$ and define an operator from $L^2(\R_t)$ to $L^2(\R_x)$ by
		\begin{equation}\label{eq:T_y_def}
			(\mathcal{T}_{\lambda,y}^{\ell,m,n}f)(x)
			:=
			\int_{\R}\e^{\,i\lambda\Phi(x,y,t)}\,a_{\ell,m,n}(x,y,t)\,f(t)\,\mathrm dt.
		\end{equation}
		Set
		\[
		\psi_y(x,t):=a_{\ell,m,n}(x,y,t),
		\qquad
		\Phi_y(x,t):=\Phi(x,y,t).
		\]
		We verify the hypotheses of Lemma~\ref{lem:PS} for the operator \eqref{eq:PS_op} with phase $\Phi_y$ and amplitude $\psi_y$.
		
		\begin{itemize}[leftmargin=2.2em]
			\item \emph{Support and $t$-width.}
			Since $\psi_y(x,t)$ contains the factor $\varphi_n(t)$, its $t$-support lies in
			\[
			\{t:2^{n-1}\le |t|\le 2^{n+1}\}.
			\]
			After the finite sign-splitting described above, each piece is supported in a rectangle whose width in the $t$-direction is
			\[
			\delta\approx 2^n.
			\]
			
			\item \emph{$t$-derivative bounds.}
			For $N=0,1,2$,
			\[
			|\partial_t^N\psi_y(x,t)|
			=
			|\partial_t^N a_{\ell,m,n}(x,y,t)|
			\lesssim 2^{-nN}
			\approx \delta^{-N}.
			\]
				Indeed, every derivative falling on $\varphi_n(t)=\varphi(2^{-n}t)$ produces a factor $2^{-n}$, while derivatives falling on $a$ are uniformly bounded.  Since $n\le0$, the latter contribution is also bounded by $C_N2^{-nN}$.

			\item \emph{Polynomial phase.}
			For each fixed $x$ and $y$, the map
			\[
			t\mapsto \Phi_y(x,t)=\Phi(x,y,t)
			\]
			is a real polynomial.
			
			\item \emph{Non-degeneracy.}
			By \eqref{eq:mu}, on $\supp\psi_y$ one has
			\[
			|\partial_{xt}^2 \Phi_y(x,t)|
			=
			|\partial_{xt}^2\Phi(x,y,t)|
			\approx
			2^{(k-k_P-1)\ell}\,2^{(k_P-1)n}.
			\]
		\end{itemize}
		Thus Lemma~\ref{lem:PS} applies with
		\[
		\mu
		\approx
		2^{(k-k_P-1)\ell}\,2^{(k_P-1)n},
		\]
		and with constants independent of $y,\ell,m,n,\lambda$.  It follows that
		\begin{equation}\label{eq:fixed_y_bound}
			\|\mathcal{T}_{\lambda,y}^{\ell,m,n}\|_{L^2(\R_t)\to L^2(\R_x)}
			\lesssim
			(\lambda\,2^{(k-k_P-1)\ell}2^{(k_P-1)n})^{-1/2},
			\qquad y\in\supp\varphi_m.
		\end{equation}
		
		By Fubini's theorem and \eqref{eq:T_y_def},
		\[
		\|T_\lambda^{\ell,m,n}f\|_{L^2(\R^2_{x,y})}^2
		=
		\int_{\supp(\varphi_m)}
		\|\mathcal{T}_{\lambda,y}^{\ell,m,n}f\|_{L^2(\R_x)}^2\,\mathrm dy.
		\]
		Using \eqref{eq:fixed_y_bound} and
		\[
		|\supp\varphi_m|\lesssim 2^m,
		\]
		we obtain
		\begin{align*}
			\|T_\lambda^{\ell,m,n}f\|_{L^2(\R^2)}^2
			&\lesssim
			\int_{\supp\varphi_m}
			(\lambda\,2^{(k-k_P-1)\ell}2^{(k_P-1)n})^{-1}\,
			\|f\|_2^2\,\mathrm dy
			\\
			&\lesssim
			(\lambda\,2^{(k-k_P-1)\ell}2^{(k_P-1)n})^{-1}\,
			2^m\,\|f\|_2^2.
		\end{align*}
		Taking square roots yields the first term in \eqref{eq:osc}.

			For the second term, we change the roles of $x$ and $y$. Then fix $x\in\supp\varphi_\ell$ and define
			\[
			(\mathcal{T}_{\lambda,x}^{\ell,m,n}f)(y)
			:=
			\int_{\R}
			\e^{\,i\lambda\Phi(x,y,t)}a_{\ell,m,n}(x,y,t)f(t)\,\mathrm dt.
			\]
			Apply Lemma~\ref{lem:PS} in the variables $(y,t)$.  On $\supp a_{\ell,m,n}$,
			\[
            |\partial_{yt}^2\Phi_x(y,t)|
            =
			|\partial_{yt}^2\Phi(x,y,t)|
			\approx
			2^{(k-k_Q-1)m}2^{(k_Q-1)n},
			\]
			and the same $t$-width and derivative bounds hold.  Therefore
			\[
			\|\mathcal{T}_{\lambda,x}^{\ell,m,n}\|_{L^2_t\to L^2_y}
			\lesssim
			(\lambda\,2^{(k-k_Q-1)m}2^{(k_Q-1)n})^{-1/2}.
			\]
			Integrating the square of this bound over the $x$-support, whose measure is $O(2^\ell)$, gives
			\[
			\|T_{\lambda}^{\ell,m,n}\|_{2\to2}
			\lesssim
			(\lambda\,2^{(k-k_Q-1)m}2^{(k_Q-1)n})^{-1/2}2^{\ell/2}.
			\]
		This proves \eqref{eq:osc}.
	\end{proof}
	
	\subsection{The triple sum}
	
	Define, for $\ell,m,n\le 0$,
	\begin{align}
		A_{\ell,m,n}&:=2^{(\ell+m+n)/2},\label{eq:defA}\\
		B_{\ell,m,n}&:=(\lambda\,2^{(k-k_P-1)\ell}2^{(k_P-1)n})^{-1/2}\,2^{m/2},\label{eq:defB}\\
		C_{\ell,m,n}&:=(\lambda\,2^{(k-k_Q-1)m}2^{(k_Q-1)n})^{-1/2}\,2^{\ell/2}.\label{eq:defC}
	\end{align}
	Lemma~\ref{lem:size} gives $\|T_\lambda^{\ell,m,n}\|_{2\to2}\lesssim A_{\ell,m,n}$, while
	Lemma~\ref{lem:osc} gives 
    \[
        \|T_\lambda^{\ell,m,n}\|_{2\to2}\lesssim \min\{B_{\ell,m,n},C_{\ell,m,n}\}.
    \]
	Combining these, one has
	\begin{equation*}
		\|T_\lambda^{\ell,m,n}\|_{2\to2}
		\lesssim
		\min\{A_{\ell,m,n},B_{\ell,m,n},C_{\ell,m,n}\}.
	\end{equation*}
	
	Define the full nonnegative triple sum
	\begin{equation}\label{eq:triplesum_def}
		S(\lambda)
		:=
		\sum_{\ell,m,n\le0}
		\min\{A_{\ell,m,n},B_{\ell,m,n},C_{\ell,m,n}\}.
	\end{equation}
	The preceding estimates show that every localized operator is controlled by
    the minimum of three quantities:
    a trivial size estimate and two oscillatory estimates.
    Consequently, the proof of Theorem~1.2 reduces to estimating the resulting triple sum.
    \begin{proposition}\label{prop:triplesum}
			Let \(S(\lambda)\) be defined by \eqref{eq:triplesum_def}.  Then, for all \(\lambda\ge2\),
			\begin{equation}\label{eq:triplesum_bound}
				S(\lambda)
				\lesssim
				\lambda^{-\rho_*}(\log\lambda)^{\gamma},
			\end{equation}
			where \(\rho_*\) is defined in \eqref{eq:rho_star} and \(\gamma\) is defined in \eqref{eq:gamma_def}.
	\end{proposition}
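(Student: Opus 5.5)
Write $\lambda=2^L$ and pass to additive exponents, setting $a=-\ell$, $b=-m$, $c=-n$, all $\ge0$. Then
\[
\log_2 A=-\tfrac{a+b+c}{2},\qquad
\log_2 B=-\tfrac{L}{2}+\tfrac{(k-k_P-1)a+(k_P-1)c-b}{2},\qquad
\log_2 C=-\tfrac{L}{2}+\tfrac{(k-k_Q-1)b+(k_Q-1)c-a}{2}.
\]
Thus $S(\lambda)=\sum_{a,b,c\ge0}2^{-F(a,b,c)}$, where $F=\max\{\alpha,\beta,\gamma'\}$ is the maximum of three affine functions, i.e.\ a convex piecewise-linear function on the octant $\R_{\ge0}^3$. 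The plan has three steps: compute $\min F$ over the real octant and show it equals $\rho_*L$; show that $F$ grows at least linearly away from the minimizing set; and count the lattice points near that set to get the logarithm.

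\emph{Minimum of $F$.} The minimum of a maximum of affine functions is a linear program. By LP duality it equals the maximum over convex weights $\theta_A+\theta_B+\theta_C=1$, $\theta_\bullet\ge0$, of $\min_{(a,b,c)\ge0}\bigl(\theta_A\alpha+\theta_B\beta+\theta_C\gamma'\bigr)$. This inner minimum is finite, and equal to $(\theta_B+\theta_C)L/2$, exactly when every coefficient of $a$, $b$, $c$ in the combination is nonnegative. So the whole problem is a small LP in the three weights.

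The candidate vertices are as follows:
\begin{itemize}
\item All three quantities balanced ($A=B=C$). Solving the $2\times2$ linear system should give $\rho_1$, with the point $(a,b,c)=L\bigl(\tfrac1{k-k_P},\tfrac1{k-k_Q},0\bigr)$ up to scaling. Note that $c=0$ there.
\item $b=0$ with $A=B=C$, with $c>0$. This should give $\rho_2$.
\item $a=b=0$ with $A=B$, giving $c=L/k_P$. This gives $\rho_3$.
\end{itemize}
I will verify directly that each $\rho_i$ is achieved, i.e.\ $\min F\le\rho_iL$ at an explicit point. For the matching lower bound $F\ge\rho_*L$ everywhere, I will exhibit explicit dual weights $\theta$ in each parameter regime (which of $\rho_1,\rho_2,\rho_3$ is smallest) that make all coefficients nonnegative.

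\emph{Summation.} Since $F\ge \rho_*L+\kappa\,\mathrm{dist}\bigl((a,b,c),M\bigr)$ with some $\kappa>0$ when the minimizing face $M$ is bounded, the sum is $\lesssim 2^{-\rho_*L}\,\#\{\text{lattice points near }M\}$. More precisely, I will bound the sum by a geometric series along the directions in which $F$ strictly increases.

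\begin{itemize}
\item If $M$ is a single vertex, then $S\lesssim\lambda^{-\rho_*}$.
\item A $\log\lambda$ arises exactly when $F$ is constant along a segment of length $\approx L$. This happens when a dual coefficient vanishes, so the optimal face is an edge.
\item For $k=2k_P$: along the ray $a=b=0$, comparing $A$ and $B$ gives the exponent $(k_P-1)c-c$ versus $c$. Moving $a$ with $c$ fixed changes $\log_2B$ by $(k-k_P-1)/2$ per unit and $\log_2A$ by $-1/2$. Balancing $A=B$ while trading $a$ against $c$, the relevant coefficient is $k-2k_P$, which vanishes precisely when $k=2k_P$, producing a flat edge and $\gamma=1$.
\item Similarly, in the $\rho_1$ regime the balanced point has $c=0$. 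Increasing $c$ along the edge $\{A=B=C\}$ changes $F$ at a rate proportional to $1-\tfrac{k_P}{k-k_P}-\tfrac{k_Q}{k-k_Q}$. This rate is positive (so $c=0$ is optimal with geometric decay) unless \eqref{eq:borderlineA} holds, in which case the edge is flat and contributes $L\cdot2^{-\rho_1L}$.
\end{itemize}

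\emph{Main obstacle.} The difficulty is the bookkeeping of the case analysis. One must show that in every parameter regime no other flat face occurs, in particular that $M$ is bounded. Boundedness follows because $A\to0$ as $a+b+c\to\infty$ while $B,C\le$ polynomially bounded. One must also show that the flat directions are exactly those described in \eqref{eq:gamma_def}, and that ties such as $\rho_1=\rho_2$ do not create extra logarithms.

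For the ties, I would first try showing that at $\rho_1=\rho_2$ the two vertices coincide. Indeed, $\rho_2$'s vertex has $c\to0$ exactly when the borderline coefficient changes sign, so the tie $\rho_1=\rho_2$ is equivalent to \eqref{eq:borderlineA}. That case is already accounted for with $\gamma=1$ unless $k=2k_P$, which is also $\gamma=1$. Near any vertex I will sum the three-variable geometric series by splitting the octant into the cones where each of $A$, $B$, $C$ is the minimum. In each cone the summand is a single exponential, which is summable with constants depending only on $k,k_P,k_Q$.
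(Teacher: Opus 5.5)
Your plan is correct in substance, but it takes a genuinely different route from the paper's. The paper never looks at the minimum of $F$ globally. It splits $\Z_{\le0}^3$ into the three regions where $A$, $B$ or $C$ is smallest and evaluates each region by explicit iterated geometric sums. This needs the subcases $k\le 2k_P$ versus $k>2k_P$, the transition point $n_0(m)$ in Region~II, and a separate treatment of $k_P=1$. The logarithms appear when one of the exponents in those geometric series vanishes.

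You instead treat $F=-\log_2\min\{A,B,C\}$ as a convex polyhedral function and compute $\min F$ by LP duality. Writing $\theta_A=1-\theta_B-\theta_C$, the dual reduces to maximizing $\theta_B+\theta_C$ subject to $\theta_B\le\frac1{k-k_P}$, $\theta_C\le\frac1{k-k_Q}$, $k_P\theta_B+k_Q\theta_C\le1$. Its three relevant vertices give exactly $2\rho_1,2\rho_2,2\rho_3$, with $\min F=\frac L2\max(\theta_B+\theta_C)$. You then use linear growth of $F$ away from its minimizing set $M$, plus lattice-point counting, so that the power of $\log\lambda$ is $\dim M$.

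What your route buys:
\begin{itemize}
\item It explains structurally why $\rho_*$ is a minimum of three candidates and why logarithms occur only at degenerate parameters.
\item It also gives the reverse bound $S(\lambda)\gtrsim\lambda^{-\rho_*}(\log\lambda)^{\gamma}$, so the logarithm is intrinsic to the triple-sum method.
\end{itemize}
The paper's computation is longer but entirely elementary. Yours needs a weak-sharp-minimum (Hoffman-type) bound with $\kappa$ independent of $L$. You should justify this via the joint homogeneity of $F$ in $(a,b,c,L)$: it gives $F_L(x)=L\,F_1(x/L)$ and $M=L\,M_1$. Also, boundedness of $M$ needs only $F\ge\frac{a+b+c}{2}$.

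Two statements in your sketch are wrong as written.

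\textbf{The vertices at the tie.} At $\rho_1=\rho_2$ the two vertices do not coincide, because the $\rho_2$ vertex always has $c=L/k_Q$. Under \eqref{eq:borderlineA}, the points $(\frac{L}{k-k_P},\frac{L}{k-k_Q},0)$ and $(\frac{(k_Q-k_P)L}{k_Q(k-k_P)},0,\frac{L}{k_Q})$ are the two ends of the flat segment $\{A=B=C\}\cap\R^3_{\ge0}$. This is harmless: it is exactly your second flat-edge bullet, and $\gamma=1$ there.

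\textbf{The criterion for a logarithm.} \emph{A log arises exactly when a dual coefficient vanishes} is not the right criterion. At the optimal weights some coefficient vanishes in every regime, yet $M$ is often a point. For example, in the nondegenerate $\rho_2$ regime both the $a$-coefficient $\frac12(1-(k-k_P)\theta_B)$ and the $c$-coefficient $\frac12(1-k_P\theta_B-k_Q\theta_C)$ vanish.

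Complementary slackness with your optimal $\theta$ settles your \emph{main obstacle}. Every minimizer has $x_j=0$ for each variable whose coefficient is strictly positive, and makes every piece with $\theta_i>0$ equal to $F$. This gives the following:
\begin{itemize}
\item In the three nondegenerate regimes, $M$ is a single point: $M=\{c=0,\ A=B=C\}$, $M=\{b=0,\ A=B=C\}$, and $M=\{a=b=0,\ A=B\}$ respectively.
\item Under \eqref{eq:borderlineA} all three coefficients vanish and $M$ is the flat segment above, of length $\approx L$.
\item For $k=2k_P$ one has $(\theta_A,\theta_B,\theta_C)=(1-\frac1{k_P},\frac1{k_P},0)$, and $M=\{b=0,\ a+c=L/k_P,\ c\ge L/k_Q\}$, again of length $\approx L$.
\end{itemize}
These are exactly the cases with $\gamma=1$, which is what you need.
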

	
	By Proposition~\ref{prop:triplesum}, we obtain the desired upper bound
	\[
	\|T_\lambda\|_{2\to2}
	\lesssim
	\lambda^{-\rho_*}(\log\lambda)^\gamma,
	\]
	which is \eqref{eq:upperbound_main}. Hence, it suffices to prove Proposition~\ref{prop:triplesum}.

		For later use in Section~\ref{sec:triplesum}, we record the elementary algebraic comparisons between
		\(A_{\ell,m,n}\), \(B_{\ell,m,n}\), and \(C_{\ell,m,n}\).  Writing \(L=\log_2\lambda\), one has
		\[
		A_{\ell,m,n}\le B_{\ell,m,n}
		\quad\Longleftrightarrow\quad
		(k-k_P)\ell+k_P n\le -L,
		\]
		\[
		A_{\ell,m,n}\le C_{\ell,m,n}
		\quad\Longleftrightarrow\quad
		(k-k_Q)m+k_Q n\le -L,
		\]
		and
		\[
		B_{\ell,m,n}\le C_{\ell,m,n}
		\quad\Longleftrightarrow\quad
		(k-k_Q)m+(k_Q-k_P)n\le (k-k_P)\ell.
		\]
		These equivalences follow directly by taking logarithms base \(2\) in the definitions
		\eqref{eq:defA}--\eqref{eq:defC}.

	The proof of Proposition~\ref{prop:triplesum} is a discrete summation argument over $(\ell,m,n)\in\Z_{\le0}^3$, obtained by partitioning the index set into regions where one of
	$A_{\ell,m,n},B_{\ell,m,n},C_{\ell,m,n}$ realizes the minimum. Since this computation is combinatorial and lengthy, we defer it to Section~\ref{sec:triplesum}.

	\section{Proof of Theorem~\ref{thm:lbound}}\label{sec:lower}
	This section establishes the lower bound in Theorem~\ref{thm:lbound} by a scale-adapted Knapp-type argument.
    We take a test function supported on a short interval in the $t$-variable,
    while $(x,y)$ is restricted to a region where the phase remains essentially
    constant.
    Consequently, the oscillatory integral behaves like the length of the
    integration interval.
	
	\begin{proof}[Proof of Theorem~\ref{thm:lbound}]
		Since the estimate is invariant under replacing $a$ by $-a$,
        we may assume
        \[
        a(0,0,0)>0.
        \]
		By continuity, there exist constants $r_0\in(0,1]$ and $c_0\in(0,1]$ such that
		$a(x,y,t)\neq0$ on $\{|x|,|y|,|t|\le r_0\}$ and
		\begin{equation}\label{eq:amp_sector}
			|a(x,y,t)|\ge c_0
			\qquad\text{whenever }|x|,|y|,|t|\le r_0.
		\end{equation}
		
		Fix a constant $r_1\in(0,r_0]$ to be specified below and let $\delta\ge 0$.
		Define
		\begin{equation*}
			f_\lambda(t):=\mathbf{1}_{[-r_1\lambda^{-\delta},\,r_1\lambda^{-\delta}]}(t).
		\end{equation*}
		Then
		\begin{equation}\label{eq:test_L2_norm}
			\|f_\lambda\|_{L^2(\R)}=(2r_1)^{1/2}\lambda^{-\delta/2}.
		\end{equation}
		Let $c\in(0,r_0]$ be a small constant to be chosen below and define, for $\lambda\ge 2$,
		\begin{equation}\label{eq:Omega_def}
			\Omega_\lambda(\delta)
			:=\Bigl\{(x,y)\in\R^2:\ |x|\le c\,X_\lambda(\delta),\ |y|\le c\,Y_\lambda(\delta)\Bigr\},
		\end{equation}
		where
		\begin{equation}\label{eq:XY_def}
			X_\lambda(\delta):=\min\{\lambda^{(k_P\delta-1)/(k-k_P)},\,1\},\qquad
			Y_\lambda(\delta):=\min\{\lambda^{(k_Q\delta-1)/(k-k_Q)},\,1\}.
		\end{equation}
		By construction, $|x|,|y|\le c\le r_0$ for all $(x,y)\in\Omega_\lambda(\delta)$.
		Moreover, since $r_1\le r_0$ and $\delta\ge0$, we also have $|t|\le r_0$ for all
		$t\in\supp f_\lambda$ and all $\lambda\ge2$.
		Therefore, for $(x,y)\in\Omega_\lambda(\delta)$ and $t\in\supp f_\lambda$ we may invoke
		\eqref{eq:amp_sector}.
		The key point is the following estimate:
		\begin{align}\label{T:lbound}
			|T_\lambda f_\lambda(x,y)| \gtrsim \lambda^{-\delta} \mathbf{1}_{\Omega_\lambda(\delta)}(x,y).
		\end{align}

		Indeed, for $(x,y)\in\Omega_\lambda(\delta)$ and $|t|\le r_1\lambda^{-\delta}$, we have
		\begin{align*}
			\lambda|x^{k-k_P}t^{k_P}|
			&\le \lambda\,(cX_\lambda(\delta))^{k-k_P}\,(r_1\lambda^{-\delta})^{k_P}.
		\end{align*}
		If $\delta\le 1/k_P$, then
		\[
		X_\lambda(\delta)=\lambda^{(k_P\delta-1)/(k-k_P)},
		\]
		hence
		\[
		\lambda\,(cX_\lambda(\delta))^{k-k_P}\,(r_1\lambda^{-\delta})^{k_P}
		=\lambda\,c^{k-k_P}\lambda^{k_P\delta-1}\,r_1^{k_P}\lambda^{-k_P\delta}
		=c^{k-k_P}r_1^{k_P}.
		\]
		If $\delta\ge 1/k_P$, then $X_\lambda(\delta)=1$, hence
		\[
		\lambda\,(cX_\lambda(\delta))^{k-k_P}\,(r_1\lambda^{-\delta})^{k_P}
		\le \lambda\,c^{k-k_P}\,r_1^{k_P}\lambda^{-k_P\delta}
		=c^{k-k_P}r_1^{k_P}\lambda^{1-k_P\delta}
		\le c^{k-k_P}r_1^{k_P}.
		\]
		Thus, in all cases,
		\begin{equation}\label{eq:phase_term1}
			\lambda|x^{k-k_P}t^{k_P}|\le c^{k-k_P}r_1^{k_P}.
		\end{equation}
		By the same argument, using $Y_\lambda(\delta)$ and $k_Q$ in place of $X_\lambda(\delta)$ and $k_P$,
		\begin{equation}\label{eq:phase_term2}
			\lambda|y^{k-k_Q}t^{k_Q}|\le c^{k-k_Q}r_1^{k_Q}.
		\end{equation}
		Combining \eqref{eq:phase_term1}--\eqref{eq:phase_term2} yields
		\[
		|\lambda\Phi(x,y,t)|
		\le c^{k-k_P}r_1^{k_P}+c^{k-k_Q}r_1^{k_Q}.
		\]
		Choose $r_1\in(0,r_0]$ and then $c\in(0,r_0]$ so that
		\[
		c^{k-k_P}r_1^{k_P}+c^{k-k_Q}r_1^{k_Q}\le \frac{\pi}{20}.
		\]
		This proves \begin{equation*}
		 	|\lambda\Phi(x,y,t)|\le \frac{\pi}{20}
		 	\qquad\text{whenever }(x,y)\in\Omega_\lambda(\delta)\text{ and }t\in\supp f_\lambda.
		 \end{equation*}

		Together with \eqref{eq:amp_sector} we obtain
		\begin{equation*}
			\e^{\,i\lambda\Phi(x,y,t)}a(x,y,t)\bigr)
			\gtrsim c_0
			\qquad \text{for all }t\in\supp f_\lambda.
		\end{equation*}
		Since
        $f_\lambda\ge0$
        and
        $\Re(e^{i\lambda\Phi}a)\gtrsim c_0$
        on the support,, we conclude that for all
		$(x,y)\in\Omega_\lambda(\delta)$,
		\begin{align}
			|T_\lambda f_\lambda(x,y)|
			&\ge
			 \int_{\supp (f_\lambda)}|a(x,y,t)|\,\mathrm dt\nonumber\\
			&\gtrsim c_0\,|\supp (f_\lambda)|\nonumber\\
			&= c_0\,(2r_1)\lambda^{-\delta}
			\ \gtrsim\ \lambda^{-\delta}.\label{eq:pointwise_lower}
		\end{align}
		By \eqref{eq:pointwise_lower}, we obtain \eqref{T:lbound}.
		
		Integrating \eqref{T:lbound} over $x,y$ yields
		\begin{equation}\label{eq:L2_lower_raw}
			\|T_\lambda f_\lambda\|_{L^2(\R^2)}
			\ge \|T_\lambda f_\lambda\|_{L^2(\Omega_\lambda(\delta))}
			\gtrsim \lambda^{-\delta}\,|\Omega_\lambda(\delta)|^{1/2}.
		\end{equation}
		Combining \eqref{eq:L2_lower_raw} with \eqref{eq:test_L2_norm} gives
		\begin{equation}\label{eq:op_lower_ratio}
			\|T_\lambda\|_{2\to2}
			\ge \frac{\|T_\lambda f_\lambda\|_2}{\|f_\lambda\|_2}
			\gtrsim \lambda^{-\delta/2}\,|\Omega_\lambda(\delta)|^{1/2}.
		\end{equation}
		By \eqref{eq:Omega_def}--\eqref{eq:XY_def}, we have
		\begin{equation}\label{eq:Omega_measure}
			|\Omega_\lambda(\delta)| \approx X_\lambda(\delta)\,Y_\lambda(\delta),
		\end{equation}
		with constants depending only on $c$.
		Thus, the lower bound in \eqref{eq:op_lower_ratio} is determined by the size of
		$X_\lambda(\delta)$ and $Y_\lambda(\delta)$, leading to three regimes.
		
		\smallskip
		\noindent\textbf{Case 1: $0\le \delta\le 1/k_Q$.}
		Then
		\[
		X_\lambda(\delta)=\lambda^{(k_P\delta-1)/(k-k_P)},
		\qquad
		Y_\lambda(\delta)=\lambda^{(k_Q\delta-1)/(k-k_Q)}.
		\]
		By \eqref{eq:op_lower_ratio}--\eqref{eq:Omega_measure},
		\begin{align}
			\|T_\lambda\|_{2\to2}
			&\gtrsim \lambda^{-\delta/2}\,
			\lambda^{\frac12\bigl(\frac{k_P\delta-1}{k-k_P}+\frac{k_Q\delta-1}{k-k_Q}\bigr)}\nonumber\\
			&=\lambda^{-\rho_1-\frac{\delta}{2}\bigl(1-\frac{k_P}{k-k_P}-\frac{k_Q}{k-k_Q}\bigr)}.\label{eq:case1_bound}
		\end{align}
		The exponent in \eqref{eq:case1_bound} is affine in $\delta$. Hence the optimum is attained at one of the endpoints.
		If
		\[
		1-\frac{k_P}{k-k_P}-\frac{k_Q}{k-k_Q}\ge 0,
		\]
		then the lower bound in \eqref{eq:case1_bound} is optimized at $\delta=0$, giving
		\[
		\|T_\lambda\|_{2\to2}\gtrsim \lambda^{-\rho_1}.
		\]
		If
		\[
		1-\frac{k_P}{k-k_P}-\frac{k_Q}{k-k_Q}<0,
		\]
		then the lower bound in \eqref{eq:case1_bound} is optimized at $\delta=1/k_Q$, giving
		\[
		\|T_\lambda\|_{2\to2}\gtrsim \lambda^{-\rho_2}.
		\]
		
		\smallskip
		\noindent\textbf{Case 2: $1/k_Q\le \delta\le 1/k_P$.}
		Then $Y_\lambda(\delta)=1$ while
		\[
		X_\lambda(\delta)=\lambda^{(k_P\delta-1)/(k-k_P)}.
		\]
		Hence
		\begin{align}
			\|T_\lambda\|_{2\to2}
			&\gtrsim \lambda^{-\delta/2}\,\lambda^{\frac12\cdot\frac{k_P\delta-1}{k-k_P}}
			=\lambda^{-\frac1{2(k-k_P)}-\frac{\delta}{2}\bigl(1-\frac{k_P}{k-k_P}\bigr)}.\label{eq:case2_bound}
		\end{align}
		Since
		\[
		1-\frac{k_P}{k-k_P}=\frac{k-2k_P}{k-k_P},
		\]
		the exponent in \eqref{eq:case2_bound} is affine in $\delta$. Again,
        the exponent is affine in $\delta$,
        so it suffices to compare the two endpoints:
		\begin{itemize}
			\item if $k>2k_P$, the lower bound is optimized at $\delta=1/k_Q$, yielding
			\[
			\|T_\lambda\|_{2\to2}\gtrsim \lambda^{-\rho_2};
			\]
			\item if $k\le 2k_P$, the lower bound is optimized at $\delta=1/k_P$, yielding
			\[
			\|T_\lambda\|_{2\to2}\gtrsim \lambda^{-\rho_3}.
			\]
		\end{itemize}
		
		\smallskip
		\noindent\textbf{Case 3: $\delta\ge 1/k_P$.}
		Then $X_\lambda(\delta)=Y_\lambda(\delta)=1$.
			Taking the endpoint choice $\delta=1/k_P$ in \eqref{eq:op_lower_ratio}, we obtain
			\[
			\|T_\lambda\|_{2\to2}
			\gtrsim
			\lambda^{-1/(2k_P)}
			=
			\lambda^{-\rho_3}.
			\]
			For $\delta>1/k_P$, the lower bound $\lambda^{-\delta/2}$ is weaker, so the optimal choice in this regime is
			$\delta=1/k_P$.

			Collecting the three regimes,
we obtain the lower bounds at
			\[
			\delta=0,\qquad \delta=\frac1{k_Q},\qquad \delta=\frac1{k_P}
			\]
			are admissible choices in the three regimes above and produce, respectively, the lower bounds
			\[
			\lambda^{-\rho_1},\qquad
			\lambda^{-\rho_2},\qquad
			\lambda^{-\rho_3}.
			\]
			Taking the strongest among them yields, for all sufficiently large $\lambda$,
		\[
		\|T_\lambda\|_{2\to2}
		\gtrsim \lambda^{-\min\{\rho_1,\rho_2,\rho_3\}}
		=\lambda^{-\rho_*},
		\]
		which is \eqref{eq:lowerbound_main}.
	\end{proof}

	\section{Proof of Proposition~\ref{prop:triplesum}}\label{sec:triplesum}
    Recall the dyadic quantities $A_{\ell,m,n}$, $B_{\ell,m,n}$, and
    $C_{\ell,m,n}$ defined in \eqref{eq:defA}--\eqref{eq:defC}, and let
    \[
    S(\lambda)
    =\sum_{\ell,m,n\le0}
    \min\{A_{\ell,m,n},B_{\ell,m,n},C_{\ell,m,n}\}.
    \]

    Write
    \[
    L=\log_2\lambda .
    \]
    
    The inequalities at the end of Section~3 partition the index set
    $\mathbb Z_{\le0}^3$ into three regions according to which of
    $A_{\ell,m,n}$,
    $B_{\ell,m,n}$,
    and
    $C_{\ell,m,n}$
    realizes the minimum.
    Accordingly, we estimate the contribution from each region separately.
    
    Throughout this section we repeatedly use the elementary geometric-sum
    principle
    \[
    \sum_{j=0}^{N}2^{\alpha j}
    \lesssim
    \begin{cases}
    2^{\alpha N},&\alpha>0,\\
    N+1,&\alpha=0,\\
    1,&\alpha<0,
    \end{cases}
    \]
    together with
    \[
    \sum_{j\ge N}2^{-\alpha j}
    \lesssim
    2^{-\alpha N},
    \qquad
    \alpha>0.
    \]

    To simplify the notation, we freely replace real-valued endpoints of the
    summation ranges by their integer parts.
    Since all estimates are up to absolute multiplicative constants, this affects
    the estimates only by an absolute factor.

	\subsection{Region I: $A\le B$ and $A\le C$}

    In this region the trivial size estimate dominates both oscillatory
    estimates.
    After summing over the spatial variables, the remaining behavior is governed
    by the $t$-scale.
    Depending on whether one or both affine constraints become inactive, three
    different ranges of the dyadic index $n$ arise naturally.
    
	The conditions $A_{\ell,m,n}\le B_{\ell,m,n}$ and $A_{\ell,m,n}\le C_{\ell,m,n}$ are equivalent to
	\begin{equation}\label{eq:AB_app}
		(k-k_P)\ell+k_P n\le -L,
	\end{equation}
	and
	\begin{equation}\label{eq:AC_app}
		(k-k_Q)m+k_Q n\le -L,
	\end{equation}
	respectively. Hence, the contribution of Region~I is
	\[
	S_A:=\sum_{\substack{\ell,m,n\le0\\ \eqref{eq:AB_app},\,\eqref{eq:AC_app}}}A_{\ell,m,n}
	=\sum_{\substack{\ell,m,n\le0\\ \eqref{eq:AB_app},\,\eqref{eq:AC_app}}}2^{(\ell+m+n)/2}.
	\]
	Fix $n\le 0$. The constraints \eqref{eq:AB_app}, \eqref{eq:AC_app} imply
	\[
	\ell\le -\max\Bigl\{\frac{L+k_P n}{k-k_P},0\Bigr\},\qquad
	m\le -\max\Bigl\{\frac{L+k_Q n}{k-k_Q},0\Bigr\}.
	\]
	Summing the geometric series in $\ell$ and $m$ gives
	\[
	S_A\;\lesssim\;\sum_{n\le 0}2^{n/2}\,
	2^{-\frac12\max\{\frac{L+k_P n}{k-k_P},0\}}\,
	2^{-\frac12\max\{\frac{L+k_Q n}{k-k_Q},0\}}.
	\]
    The behavior changes according to the sign of
$L+k_Qn$
and
$L+k_Pn$.
Hence we split the summation into three ranges.

	\smallskip
	\noindent\emph{(i) The range $-\frac{L}{k_Q}\le n\le 0$.}
	Here $L+k_P n\ge 0$ and $L+k_Q n\ge 0$, so
	\begin{align*}
		S_A\lesssim S_{A,1}
		&:=\sum_{-L/k_Q\le n\le 0}
		2^{n/2}\,2^{-(L+k_P n)/(2(k-k_P))}\,2^{-(L+k_Q n)/(2(k-k_Q))}\\
		&=\lambda^{-\rho_1}\sum_{-L/k_Q\le n\le 0}
		2^{\frac12\bigl(1-\frac{k_P}{k-k_P}-\frac{k_Q}{k-k_Q}\bigr)\,n},
	\end{align*}
	where $\rho_1=\frac1{2(k-k_P)}+\frac1{2(k-k_Q)}$.
	If
	\[
	1-\frac{k_P}{k-k_P}-\frac{k_Q}{k-k_Q}>0,
	\]
	then the summand is dominated by the endpoint \(n=0\), and the sum is $\lesssim 1$.
	If equality holds, the summand is constant and the sum has $\sim L$ terms, i.e.\ $\lesssim L$.
	If the coefficient is negative, the sum is dominated by the lower endpoint $n=-L/k_Q$, yielding
	\[
	S_{A,1}\lesssim \lambda^{-\rho_1}\,
	2^{-\frac{L}{2k_Q}\bigl(1-\frac{k_P}{k-k_P}-\frac{k_Q}{k-k_Q}\bigr)}
	=\lambda^{-\rho_2},
	\]
	since
	\[
	\rho_1-\frac1{2k_Q}\Bigl(\frac{k_P}{k-k_P}+\frac{k_Q}{k-k_Q}-1\Bigr)
	=
	\frac1{2(k-k_P)}+\frac1{2k_Q}\Bigl(1-\frac{k_P}{k-k_P}\Bigr)
	=\rho_2.
	\]
	Thus
	\begin{equation}\label{eq:SA1_bound}
		S_{A,1}\lesssim
		\lambda^{-\rho_1}(\log\lambda)^{\mathbf 1_{\eqref{eq:borderlineA}}}
		+\lambda^{-\rho_2}.
	\end{equation}
	
	\smallskip
	\noindent\emph{(ii) The range $-\frac{L}{k_P}\le n\le -\frac{L}{k_Q}$.}
	Here $L+k_P n\ge 0$ but $L+k_Q n\le 0$, so the $m$-constraint becomes $m\le 0$ and
	\begin{align*}
		S_A\lesssim S_{A,2}
		&:=\sum_{-L/k_P\le n\le -L/k_Q}
		2^{n/2}\,2^{-(L+k_P n)/(2(k-k_P))}\\
		&=\lambda^{-\frac1{2(k-k_P)}}\sum_{-L/k_P\le n\le -L/k_Q}
		2^{\frac12(1-\frac{k_P}{k-k_P})\,n}.
	\end{align*}
	Set
	\[
	\alpha:=\frac12\Bigl(1-\frac{k_P}{k-k_P}\Bigr)=\frac{k-2k_P}{2(k-k_P)}.
	\]
	If $k>2k_P$, then $\alpha>0$ and the sum is dominated by the upper endpoint $n=-L/k_Q$, giving
	\[
	S_{A,2}\lesssim \lambda^{-\rho_2}.
	\]
	If $k=2k_P$, then $\alpha=0$ and the sum has $\sim L$ terms, giving
	\[
	S_{A,2}\lesssim \lambda^{-1/(2(k-k_P))}L
	=\lambda^{-\rho_3}L,
	\]
	since then $\rho_3=\frac1{2k_P}=\frac1{2(k-k_P)}$.
	If $k<2k_P$, then $\alpha<0$ and the sum is dominated by $n=-L/k_P$, giving
	\[
	S_{A,2}\lesssim \lambda^{-1/(2k_P)}=\lambda^{-\rho_3}.
	\]
	Hence
	\begin{equation}\label{eq:SA2_bound}
		S_{A,2}\lesssim
		\begin{cases}
			\lambda^{-\rho_2},&k>2k_P,\\
			\lambda^{-\rho_3}\log\lambda,&k=2k_P,\\
			\lambda^{-\rho_3},&k<2k_P.
		\end{cases}
	\end{equation}
	
	\smallskip
	\noindent\emph{(iii) The range $n\le -\frac{L}{k_P}$.}
	Then $L+k_P n\le 0$ and $L+k_Q n\le 0$, so both $\ell,m$ are only constrained by $\ell,m\le 0$, and
	\begin{equation}\label{eq:SA3_bound}
		S_{A,3}:=\sum_{n\le -L/k_P}2^{n/2}\;\lesssim\;2^{-L/(2k_P)}=\lambda^{-\rho_3}.
	\end{equation}
	
	Combining \eqref{eq:SA1_bound}, \eqref{eq:SA2_bound}, and \eqref{eq:SA3_bound} yields
	\begin{equation}\label{eq:SA_final_rig}
		S_A\lesssim
		\begin{cases}
			\max\{\lambda^{-\rho_1}(\log\lambda)^{\mathbf 1_{\eqref{eq:borderlineA}}},\,\lambda^{-\rho_2},\,\lambda^{-\rho_3}\},&k\neq 2k_P,\\[2pt]
			\lambda^{-\rho_3}\log\lambda,&k=2k_P.
		\end{cases}
	\end{equation}
	
	\subsection{Region II: $B\le A$ and $B\le C$}
	
	Let
	\[
	S_B:=\sum_{\substack{\ell,m,n\le0\\ B\le A,\,B\le C}}B_{\ell,m,n}.
	\]
	The constraints $B\le A$ and $B\le C$ are equivalent to
	\begin{equation}\label{eq:BA_app}
		(k-k_P)\ell+k_P n\ge -L,
	\end{equation}
	and
	\begin{equation}\label{eq:BC_app}
		(k-k_Q)m+(k_Q-k_P)n\le (k-k_P)\ell,
	\end{equation}
	respectively.
    
	The argument differs substantially according to whether $k\le 2k_P$ or $k>2k_P$.
    Indeed, the exponent appearing in the $\ell$-summation changes sign exactly at
    $k=2k_P$,
    which determines whether the geometric series is dominated by its upper or lower endpoint.
    
	\subsubsection{The case $k\le 2k_P$}
	
	We drop the constraint \eqref{eq:BC_app}, which only enlarges the summation domain and therefore gives an upper bound, and estimate using \eqref{eq:BA_app} alone:
	\[
	S_B\le \sum_{m\le 0}\sum_{\substack{\ell,n\le 0\\ \eqref{eq:BA_app}}}B_{\ell,m,n}
	=
	\sum_{m\le 0}2^{-L/2}2^{m/2}
	\sum_{\substack{\ell,n\le 0\\ \eqref{eq:BA_app}}}
	2^{-\frac{k-k_P-1}{2}\ell}\,2^{-\frac{k_P-1}{2}n}.
	\]
	Fix $\ell$, and the condition \eqref{eq:BA_app} gives
	\[
	n\ge -\frac{L+(k-k_P)\ell}{k_P}.
	\]
	Since $n\le 0$, this forces $\ell\ge -\frac{L}{k-k_P}$. Thus $\ell\in[-L/(k-k_P),0]$, and for such $\ell$,
	\[
	\sum_{-\frac{L+(k-k_P)\ell}{k_P}\le n\le 0}
	2^{-\frac{k_P-1}{2}n}
	\;\lesssim\;
	2^{\frac{k_P-1}{2k_P}(L+(k-k_P)\ell)}.
	\]
		Here, there is no exceptional \(k_P=1\) case in the present subcase \(k\le 2k_P\), because \(1\le k_P<k_Q<k\) would then force \(k\le2\), which is impossible.
	Therefore
	\begin{align}\label{eq:SB}
    \begin{split}
		S_B
		&\lesssim
		\sum_{m\le 0}2^{-L/2}2^{m/2}
		\sum_{-L/(k-k_P)\le \ell\le 0}
		2^{-\frac{k-k_P-1}{2}\ell}\,
		2^{\frac{k_P-1}{2k_P}(L+(k-k_P)\ell)}\\
		&=
		\sum_{m\le 0}
		2^{-L/(2k_P)}2^{m/2}
		\sum_{-L/(k-k_P)\le \ell\le 0}
		2^{\bigl(1-\frac{k}{2k_P}\bigr)\ell}.
    \end{split}
	\end{align}
	If $k<2k_P$, then $1-\frac{k}{2k_P}>0$, and the $\ell$-sum is $\lesssim 1$.
	If $k=2k_P$, then the summand is constant, and the $\ell$-sum has $\sim L$ terms, hence $\lesssim L$.
	Since $\sum_{m\le 0}2^{m/2}\lesssim 1$, we conclude
	\begin{equation}\label{eq:SB_kle2kP}
		S_B\lesssim
		\begin{cases}
			\lambda^{-\rho_3},&k<2k_P,\\
			\lambda^{-\rho_3}\log\lambda,&k=2k_P.
		\end{cases}
	\end{equation}
	
	\subsubsection{The case $k>2k_P$}
	
	We split the $m$-sum into two ranges:
	\[
	m\le -\frac{L}{k-k_Q}
	\qquad\text{and}\qquad
	-\frac{L}{k-k_Q}\le m\le 0.
	\]
	
	Let $m\le -\frac{L}{k-k_Q}$.
	In this subrange the inequality \eqref{eq:BC_app} is redundant: indeed, since $n\le 0$,
	\[
	(k-k_Q)m+(k_Q-k_P)n\le (k-k_Q)m\le -L,
	\]
	while \eqref{eq:BA_app} implies
	\[
	(k-k_P)\ell\ge -L-k_P n\ge -L.
	\]
	Hence
	\[
	(k-k_Q)m+(k_Q-k_P)n\le (k-k_P)\ell,
	\]
	which is \eqref{eq:BC_app}. Therefore we may keep only \eqref{eq:BA_app}. We claim that
	\begin{equation}\label{eq:BA-sum-kgt}
		\sum_{\substack{\ell,n\le0\\ \eqref{eq:BA_app}}}
		2^{-L/2}
		2^{-\frac{k-k_P-1}{2}\ell}
		2^{-\frac{k_P-1}{2}n}
		\lesssim
		2^{-L/(2(k-k_P))}.
	\end{equation}
	For \(k_P>1\), this follows exactly from the preceding computation \eqref{eq:SB} without the $m$-term. Note that the $\ell$-sum dominated by
	\(\ell=-L/(k-k_P)\) because \(1-\frac{k}{2k_P}<0\).
		If \(k_P=1\), the inner \(n\)-sum is a length factor rather than a genuine geometric sum.  More precisely,
		\[
		\sum_{-L-(k-1)\ell\le n\le0}1
		\lesssim
		1+L+(k-1)\ell.
		\]
		Writing \(j=-\ell\) and using the summation by parts, the remaining sum is bounded by
		\[
		2^{-L/2}\sum_{0\le j\le L/(k-1)}
		\bigl(1+L-(k-1)j\bigr)2^{\frac{k-2}{2}j}
		\lesssim
		2^{-L/(2(k-1))},
		\]
		which is \eqref{eq:BA-sum-kgt} when \(k_P=1\).
	Therefore
	\begin{align*}
		\sum_{m\le -L/(k-k_Q)}
		\sum_{\substack{\ell,n\le 0\\ \eqref{eq:BA_app}}}
		B_{\ell,m,n}
		&\lesssim
		\Bigl(\sum_{m\le -L/(k-k_Q)}2^{m/2}\Bigr)
		2^{-L/(2(k-k_P))}\\
		&\lesssim
		2^{-L\bigl(\frac1{2(k-k_P)}+\frac1{2(k-k_Q)}\bigr)}
		=
		\lambda^{-\rho_1}.
	\end{align*}
	
	We now assume
	\[
	-\frac{L}{k-k_Q}\le m\le 0.
	\]
	Here \eqref{eq:BC_app} is effective. 
    For fixed $m$,
    the two lower bounds for $\ell$ intersect at a unique value of $n$.
    This transition point plays a crucial role in the summation and is therefore denoted by
	\begin{equation*}
		n_0(m):=-\frac{L+(k-k_Q)m}{k_Q}
		=-\frac{L}{k_Q}-\frac{k-k_Q}{k_Q}\,m,
	\end{equation*}
	which satisfies $-L/k_Q\le n_0(m)\le 0$ on this $m$-range. The two lower bounds for $\ell$ coming from
	\eqref{eq:BA_app} and \eqref{eq:BC_app} are
	\[
	\ell\ge \ell_1(n):=-\frac{L+k_P n}{k-k_P},
	\qquad
	\ell\ge \ell_2(m,n):=
	\frac{(k-k_Q)m+(k_Q-k_P)n}{k-k_P}.
	\]
	These coincide exactly when $n=n_0(m)$, and one checks that
	\[
	n\le n_0(m)\ \Longrightarrow\ \ell_1(n)\ge \ell_2(m,n),
	\qquad
	n\ge n_0(m)\ \Longrightarrow\ \ell_2(m,n)\ge \ell_1(n).
	\]
	Thus, we split the summation accordingly.
	Below the transition point, $\ell_1$ is the active constraint.

	\smallskip
	\noindent\emph{(b1) The range $-L/k_P\le n\le n_0(m)$.}
	Recall that $n\ge -L/k_P$ is necessary for \eqref{eq:BA_app} to admit any $\ell\le 0$.
	Here $\ell\ge \ell_1(n)$ is the active constraint, and since $B_{\ell,m,n}$ is increasing as $\ell$ decreases
	because of the factor $2^{-\frac{k-k_P-1}{2}\ell}$, the $\ell$-sum is dominated by $\ell=\ell_1(n)$:
	\[
	\sum_{\ell_1(n)\le \ell\le 0}
	2^{-\frac{k-k_P-1}{2}\ell}
	\lesssim
	2^{-\frac{k-k_P-1}{2}\ell_1(n)}
	=
	2^{\frac{k-k_P-1}{2(k-k_P)}(L+k_P n)}.
	\]
	Hence, for each fixed $m$,
	\begin{align*}
		&\sum_{-L/k_P\le n\le n_0(m)}
		\sum_{\substack{\ell\le 0\\ \eqref{eq:BA_app},\,\eqref{eq:BC_app}}}
		B_{\ell,m,n}\\
		\lesssim
		&\sum_{-L/k_P\le n\le n_0(m)}
		2^{-L/2}2^{m/2}\,
		2^{-\frac{k_P-1}{2}n}\,
		2^{\frac{k-k_P-1}{2(k-k_P)}(L+k_P n)}\\
		=&2^{-L/(2(k-k_P))}\,2^{m/2}
		\sum_{-L/k_P\le n\le n_0(m)}
		2^{\frac{k-2k_P}{2(k-k_P)}\,n}.
	\end{align*}
	Since $k>2k_P$, the exponent $\frac{k-2k_P}{2(k-k_P)}>0$, so the $n$-sum is dominated by $n=n_0(m)$, yielding
	\[
	\lesssim
	2^{-L/(2(k-k_P))}\,2^{m/2}\,
	2^{-\frac{k-2k_P}{2k_Q(k-k_P)}(L+(k-k_Q)m)}
	=
	\lambda^{-\rho_2}\,2^{\theta m},
	\]
	where
	\begin{equation}\label{eq:theta_def}
		\theta:=
		\frac12-\frac{k-k_Q}{2k_Q}\Bigl(1-\frac{k_P}{k-k_P}\Bigr)
		=
		\frac12-\frac{(k-2k_P)(k-k_Q)}{2k_Q(k-k_P)}.
	\end{equation}
	Thus,
	\begin{align}\label{eq:b1}
		\sum_{-L/k_P\le n\le n_0(m)}
		\sum_{\substack{\ell\le 0\\ \eqref{eq:BA_app},\,\eqref{eq:BC_app}}}
		B_{\ell,m,n}
		\lesssim
		\lambda^{-\rho_2}\,2^{\theta m}.
	\end{align}
	
    Above the transition point,
    $\ell_2$ becomes active.
    
	\smallskip
	\noindent\emph{(b2) The range $n_0(m)\le n\le 0$.}
	Here $\ell\ge \ell_2(m,n)$ is active, and again the $\ell$-sum is dominated by $\ell=\ell_2(m,n)$:
	\[
	\sum_{\ell\ge \ell_2(m,n),\,\ell\le 0}
	2^{-\frac{k-k_P-1}{2}\ell}
	\lesssim
	2^{-\frac{k-k_P-1}{2}\ell_2(m,n)}
	=
	2^{-\frac{k-k_P-1}{2(k-k_P)}
		\bigl((k-k_Q)m+(k_Q-k_P)n\bigr)}.
	\]
	Thus
	\begin{align*}
		&\sum_{n_0(m)\le n\le 0}
		\sum_{\substack{\ell\le 0\\ \eqref{eq:BA_app},\,\eqref{eq:BC_app}}}
		B_{\ell,m,n}\\
		\lesssim
		&\sum_{n_0(m)\le n\le 0}
		2^{-L/2}2^{m/2}\,
		2^{-\frac{k_P-1}{2}n}\,
		2^{-\frac{k-k_P-1}{2(k-k_P)}
			\bigl((k-k_Q)m+(k_Q-k_P)n\bigr)}\\
		=
		&2^{-L/2}
		2^{\bigl(\frac12-\frac{k-k_P-1}{2(k-k_P)}(k-k_Q)\bigr)m}
		\sum_{n_0(m)\le n\le 0}
		2^{-\beta n},
	\end{align*}
	where
	\begin{align}\label{eq:beta}
	\beta:=
	\frac{k_P-1}{2}
	+
	\frac{k-k_P-1}{2(k-k_P)}(k_Q-k_P)
	=
	\frac{k_Q-1}{2}
	-\frac{k_Q-k_P}{2(k-k_P)}.
	\end{align}
	Since $\beta>0$ and $n\le 0$, the $n$-sum is dominated by $n=n_0(m)$:
	\[
	\sum_{n_0(m)\le n\le 0}2^{-\beta n}
	\lesssim
	2^{-\beta n_0(m)}
	=
	2^{\frac{\beta}{k_Q}(L+(k-k_Q)m)}.
	\]
	Using the relation $-\frac12+\frac{\beta}{k_Q}=-\rho_2$ and combining the $m$-powers gives precisely the same factor $2^{\theta m}$ as in \eqref{eq:theta_def}. Hence
	\begin{align}\label{eq:b2}
		\sum_{n_0(m)\le n\le 0}
		\sum_{\substack{\ell\le 0\\ \eqref{eq:BA_app},\,\eqref{eq:BC_app}}}
		B_{\ell,m,n}
		\lesssim
		\lambda^{-\rho_2}\,2^{\theta m}.
	\end{align}
	
	\smallskip
	Summing \eqref{eq:b1} and \eqref{eq:b2} over
	\[
	m\in[-L/(k-k_Q),0]
	\]
	yields
	\begin{equation*}
		\sum_{-L/(k-k_Q)\le m\le 0}
		\sum_{\substack{\ell,n\le 0\\ \eqref{eq:BA_app},\,\eqref{eq:BC_app}}}
		B_{\ell,m,n}
		\lesssim
		\lambda^{-\rho_2}
		\sum_{-L/(k-k_Q)\le m\le 0}2^{\theta m}.
	\end{equation*}
	Using the geometric-sum principle,
	\[
	\sum_{-L/(k-k_Q)\le m\le 0}2^{\theta m}
	\lesssim
	\begin{cases}
		1,&\theta>0,\\
		L,&\theta=0,\\
		2^{\theta(-L/(k-k_Q))},&\theta<0.
	\end{cases}
	\]
	From \eqref{eq:theta_def}, $\theta=0$ is equivalent to the borderline condition \eqref{eq:borderlineA}, and in this case $\rho_1=\rho_2$.
	Moreover,
	\[
	2^{\theta(-L/(k-k_Q))}
	=
	\lambda^{-\theta/(k-k_Q)}
	=
	\lambda^{\rho_2-\rho_1}.
	\]
	Hence
	\[
	\sum_{-L/(k-k_Q)\le m\le 0}
	\sum_{\substack{\ell,n\le 0\\ \eqref{eq:BA_app},\,\eqref{eq:BC_app}}}
	B_{\ell,m,n}
	\lesssim
	\max\{\lambda^{-\rho_2},\,\lambda^{-\rho_1}(\log\lambda)^{\mathbf 1_{\eqref{eq:borderlineA}}}\}.
	\]
	Combining this with the subrange $m\le -L/(k-k_Q)$, for $k>2k_P$ we have
	\begin{equation}\label{eq:SB_kgt2kP}
		S_B\lesssim
		\max\{
		\lambda^{-\rho_1}(\log\lambda)^{\mathbf 1_{\eqref{eq:borderlineA}}},
		\,\lambda^{-\rho_2}
		\}.
	\end{equation}
	
	Putting together \eqref{eq:SB_kle2kP} and \eqref{eq:SB_kgt2kP} gives
	\begin{equation}\label{eq:SB_final_rig}
		S_B\lesssim
		\begin{cases}
			\max\{\lambda^{-\rho_1}(\log\lambda)^{\mathbf 1_{\eqref{eq:borderlineA}}},\,\lambda^{-\rho_2},\,\lambda^{-\rho_3}\},&k\neq 2k_P,\\[2pt]
			\lambda^{-\rho_3}\log\lambda,&k=2k_P.
		\end{cases}
	\end{equation}

	\subsection{Region III: $C\le A$ and $C\le B$}
	
	Let
	\[
	S_C:=\sum_{\substack{\ell,m,n\le0\\ C\le A,\,C\le B}}C_{\ell,m,n}.
	\]
	The constraints $C\le A$ and $C\le B$ are equivalent to
	\begin{equation}\label{eq:CA_app}
		(k-k_Q)m+k_Q n\ge -L,
	\end{equation}
	and
	\begin{equation}\label{eq:CB_app}
		(k-k_Q)m+(k_Q-k_P)n\ge (k-k_P)\ell,
	\end{equation}
	respectively.
	
	Unlike Regions I and II,
    Region III is more naturally analyzed after reversing the dyadic indices, i.e.
	\[
	u:=-\ell,\qquad v:=-m,\qquad w:=-n\qquad(u,v,w\in\Z_{\ge 0}).
	\]
	In these variables, \eqref{eq:CA_app}--\eqref{eq:CB_app} become
	\begin{equation}\label{eq:CAprime_app}
		(k-k_Q)v+k_Q w\le L,
	\end{equation}
	and
	\begin{equation}\label{eq:CBprime_app}
		(k-k_Q)v+(k_Q-k_P)w\le (k-k_P)u,
	\end{equation}
	while
	\[
	C_{\ell,m,n}=C_{-u,-v,-w}
	=
	2^{-L/2}\,2^{-u/2}\,
	2^{\frac{k-k_Q-1}{2}v}\,
	2^{\frac{k_Q-1}{2}w}.
	\]
	For fixed $(v,w)$ satisfying \eqref{eq:CAprime_app}, the inequality \eqref{eq:CBprime_app} forces
	\[
	u\ge \frac{(k-k_Q)v+(k_Q-k_P)w}{k-k_P}.
	\]
	Therefore the $u$-sum is a tail of a decaying geometric series:
	\[
	\sum_{u\ge \frac{(k-k_Q)v+(k_Q-k_P)w}{k-k_P}}
	2^{-u/2}
	\lesssim
	2^{-\frac{(k-k_Q)v+(k_Q-k_P)w}{2(k-k_P)}}.
	\]
	Hence
	\begin{align*}
		S_C
		&\lesssim
		2^{-L/2}
		\sum_{\substack{v,w\ge 0\\ \eqref{eq:CAprime_app}}}
		2^{\frac{k-k_Q-1}{2}v}
		2^{\frac{k_Q-1}{2}w}
		2^{-\frac{(k-k_Q)v+(k_Q-k_P)w}{2(k-k_P)}}\\
		&=
		2^{-L/2}
		\sum_{\substack{v,w\ge 0\\ (k-k_Q)v+k_Q w\le L}}
		2^{a_v v}\,2^{b_w w},
	\end{align*}
	where
	\[
	a_v:=\frac12\Bigl((k-k_Q-1)-\frac{k-k_Q}{k-k_P}\Bigr),
	\qquad
	b_w:=\frac12\Bigl((k_Q-1)-\frac{k_Q-k_P}{k-k_P}\Bigr).
	\]
	Note that $b_w>0$. For fixed $v$, the constraint \eqref{eq:CAprime_app} gives
	\[
	0\le w\le \frac{L-(k-k_Q)v}{k_Q}.
	\]
	Thus the $w$-sum is dominated by the maximal $w$:
	\[
	\sum_{0\le w\le \frac{L-(k-k_Q)v}{k_Q}}2^{b_w w}
	\lesssim
	2^{\frac{b_w}{k_Q}(L-(k-k_Q)v)}.
	\]
	Therefore
	\[
	S_C
	\lesssim
	2^{-L/2}\,
	2^{\frac{b_w}{k_Q}L}
	\sum_{0\le v\le L/(k-k_Q)}
	2^{(a_v-\frac{b_w}{k_Q}(k-k_Q))v}.
	\]
	Note that $b_w = \beta$ given in \eqref{eq:beta} yielding
	\[
	-\frac12+\frac{b_w}{k_Q} = -\frac12 + \frac{\beta}{k_Q}=-\rho_2,
	\]
	hence the prefactor equals $\lambda^{-\rho_2}$. Moreover, direct computation gives
	\[
	a_v-\frac{b_w}{k_Q}(k-k_Q)=-\theta,
	\]
	with the same $\theta$ as in \eqref{eq:theta_def}. Consequently,
	\begin{equation}\label{eq:SC_geomsum}
		S_C\lesssim
		\lambda^{-\rho_2}
		\sum_{0\le v\le L/(k-k_Q)}2^{-\theta v}.
	\end{equation}
	Evaluating the geometric sum in \eqref{eq:SC_geomsum} yields
	\[
	S_C\lesssim
	\begin{cases}
		\lambda^{-\rho_2},&\theta>0,\\
		\lambda^{-\rho_2}L=\lambda^{-\rho_1}L,&\theta=0\quad(\text{i.e.\ }\eqref{eq:borderlineA}),\\
		\lambda^{-\rho_2}\,2^{-\theta L/(k-k_Q)}=\lambda^{-\rho_1},&\theta<0,
	\end{cases}
	\]
	since $\theta=0\iff \rho_1=\rho_2\iff \eqref{eq:borderlineA}$ and
	\[
	2^{-\theta L/(k-k_Q)}
	=
	\lambda^{-\theta/(k-k_Q)}
	=
	\lambda^{\rho_2-\rho_1}.
	\]
	Thus
	\begin{equation}\label{eq:SC_final_rig}
		S_C\lesssim
		\max\{
		\lambda^{-\rho_1}(\log\lambda)^{\mathbf 1_{\eqref{eq:borderlineA}}},
		\,\lambda^{-\rho_2}
		\}.
	\end{equation}
	
	Since $S(\lambda)\le S_A+S_B+S_C$, combining \eqref{eq:SA_final_rig}, \eqref{eq:SB_final_rig}, and \eqref{eq:SC_final_rig} gives
	\[
	S(\lambda)\lesssim
	\begin{cases}
		\max\{\lambda^{-\rho_1}(\log\lambda)^{\mathbf 1_{\eqref{eq:borderlineA}}},\,\lambda^{-\rho_2},\,\lambda^{-\rho_3}\},&k\neq 2k_P,\\[2pt]
		\lambda^{-\rho_3}\log\lambda,&k=2k_P.
	\end{cases}
	\]

    If $k\neq2k_P$, the logarithmic loss can occur only in the
    $\lambda^{-\rho_1}$-term.
    Moreover, it contributes to the final estimate only when
    $\rho_1=\rho_*$, which is equivalent to the critical condition
    \eqref{eq:borderlineA}.
    Otherwise,
    \[
    \lambda^{-\rho_1}\log\lambda
    \lesssim
    \lambda^{-\rho_*}
    \]
    since $\rho_1>\rho_*$.

    When $k=2k_P$, we have $\rho_*=\rho_3$, and therefore
    \[
    S(\lambda)
    \lesssim
    \lambda^{-\rho_*}\log\lambda.
    \]
    Hence, in both cases,
    \[
    S(\lambda)
    \lesssim
    \lambda^{-\rho_*}(\log\lambda)^\gamma,
    \]
    where \(\gamma\) is given by \eqref{eq:gamma_def}.
	This completes the proof of Proposition~\ref{prop:triplesum}.

    \section*{Acknowledgement}

    This work is supported by the National Research Foundation of Korea~(NRF) grants funded by the Korea government~(MSIT)(RS-2026-25495323; C. Cho and C. W. Yang), (RS-2021-NR061906, RS-2024-00461749; J. B. Lee).

\end{document}